\newtheorem{prop}{Proposition}[section]
\newtheorem{thm}[prop]{Theorem}
\newtheorem{cor}[prop]{Corollary}
\newtheorem{defn}[prop]{Definition}
\newtheorem{rem}[prop]{Remark}
\newtheorem{lem}[prop]{Lemma}
\newcommand{\N}{\mathbb{N}}
\numberwithin{equation}{section}
\begin{document}

\title{(FC)-Sequences, Mixed Multiplicities and Reductions of Modules}
\author{R. Callejas-Bedregal$^{1,}\,$\thanks{Partially supported by CAPES-Brazil Grant Procad-190/2007, CNPq-Brazil Grant 620108/2008-8 and by
FAPESP-Brazil Grant 2010/03525-9.  2000 Mathematics Subject
Classification: 13H15(primary). {\it Key words}: Mixed
multiplicities, (FC)-sequences, Buchsbaum-Rim multiplicity,
reduction. }\,\,\,\,and\,\,\,V. H. Jorge P\'erez$^{2}$
\thanks{Work partially supported by CNPq-Brazil - Grant
309033/2009-8, Procad-190/2007, FAPESP Grant 09/53664-8.}}

\date{}
\maketitle

\noindent $^1$ Universidade Federal da Para\'\i ba-DM, 58.051-900,
Jo\~ao Pessoa, PB, Brazil ({\it e-mail: roberto@mat.ufpb.br}).

\vspace{0.3cm}
\noindent $^2$ Universidade de S{\~a}o Paulo -
ICMC, Caixa Postal 668, 13560-970, S{\~a}o Carlos-SP, Brazil ({\it
e-mail: vhjperez@icmc.usp.br}).

\vspace{0.3cm}
\begin{abstract}

Let $(R, \mathfrak m)$ be a Noetherian local ring. In this work we
use the notion of (FC)-sequences, as defined in
\cite{perez-bedregal1}, to present some results concerning
reductions and the positivity of mixed multiplicities of a finite
collection of arbitrary submodules of $R^p.$ We also investigate
the length of maximal (FC)-sequences. We actually work in the more
general context of standard graded $R$-algebras.
\end{abstract}

\maketitle
\section{Introduction}

The notion of mixed multiplicities for a family $E_1,\ldots, E_q$
of $R$-submodules of $R^p$ of finite colength, where $R$ is a
local Noetherian ring, have been described in a purely algebraic
form by Kirby and Rees in \cite{Kirby-Rees1} and in an
algebro-geometric form by Kleiman and Thorup in
\cite{Kleiman-Thorup} and \cite{Kleiman-Thorup2}. The results of
Risler and Teissier in \cite{Teissier} and of  Rees in \cite{Rees}
where generalized for modules in \cite{Kirby-Rees1} and
\cite{Bedregal-Perez}, where the mixed multiplicities for
$E_1,\ldots, E_q$ are described as the Buchsbaum-Rim multiplicity
of a module generated by a suitable joint reduction of
$E_1,\ldots, E_q.$

The question which arises as to what happens with the positivity
of the mixed multiplicities of arbitrary ideals and modules. In
order to answer this question in the case of ideals, Vi\^et in
\cite{Viet8} (see also \cite{Manh-Viet}) built a sequence of
elements, called an (FC)-sequence, and proved that mixed
multiplicities  of a set of arbitrary ideals could be described as
the Hilbert-Samuel multiplicity of the ideal generated by a
suitable (FC)-sequence. Similar descriptions were also obtained by
Trung in \cite{Trung1} using  the stronger notion of
filter-regular sequences. The notion of (FC)-sequences was
generalized by the authors in \cite{perez-bedregal1} for a family
of arbitrary modules were they proved that its mixed
multiplicities could be described as the Buchsbaum-Rim
multiplicity of the module generated by a suitable (FC)-sequence,
thus extending the main results of Vi\^et and Vi\^et and Manh
({\it loc. cit.}) to this new setting. The above mentioned results
show that (FC)-sequences hold important information on mixed
multiplicities.

In this work we present some results concerning reductions and the
vanishing and non-vanishing of  mixed multiplicities of a family
of arbitrary $R$-submodules $F, E_1,\ldots, E_q$ of  $R^p$ with
$F$ of finite colength in $R^p.$ We prove many new and more
general results than in Trung \cite{Trung1}, Vi\^et
(\cite{Viet10}, \cite{Viet8}) and  the authors in
\cite{perez-bedregal1}. In fact, we do this in the context of
standard graded algebras.

Fix a graded $R$-algebra $G=\oplus G_n,$ that, as usual, is
generated as algebra by finitely many elements of degree one and
$M$ a finitely generated graded $G$-module. This paper is divided
into six sections.

In Section 2, we recall the concept of Buchsbaum-Rim
multiplicities of $R$-submodules of $G_1$ associated to $M,$
introduced by Buchsbaum and Rim in \cite{Buchsbaum-Rim} for
modules and carried out in this generality by Kleiman and Thorup
in \cite{Kleiman-Thorup} and by Kirby and Rees in
\cite{Kirby-Rees1}.

In Section 3, we recall the notion of (FC)-sequences and
weak-(FC)-sequences of $R$-submodules of $G_1$ associated to $M,$
introduced in this context by the authors in
\cite{perez-bedregal1} and in the ideal case by Vi\^et in
\cite{Viet8}.

In Section 4, we recall the notion of mixed multiplicities of
arbitrary $R$-submodules of $G_1,$ as introduced by the authors in
\cite{perez-bedregal1}, and state some of its main properties
proved by the authors ({\it loc. cit.}).

In Section 5, we give some characterizations for the length of
maximal weak-(FC)-sequences and the relation between maximal
weak-(FC)-sequences and reductions of $R$-submodules of $G_1$ with
respect to $M$.

In Section 6, we describe how to apply to arbitrary modules the
results obtained in the previous sections.

\section{Buchsbaum-Rim multiplicities}

\vspace{0.3cm}  Fix $(R, \mathfrak m)$ an arbitrary Noetherian
local ring; fix a graded $R$-algebra $G=\oplus G_n,$ which as
usual is generated as algebra by finitely many elements of degree
one; fix $I$ a finitely generated $R$-submodule of $G_1$ such that
$\ell(G_1/I)<\infty;$ and fix $M$ a finitely generated graded
$G$-module. Let $r:=\dim(\mbox{Proj}(G))$ be the dimension of
$\mbox{Proj}(G).$ As a function of $n,q$, the length,
$$h(n,q):=\ell(M_{n+q}/I^nM_q)$$
\noindent is eventually a polynomial in $n,q$ of total degree
equal to $\dim(\mbox{Supp}(M)),$ which is at most $r,$ (see
\cite[Theorem 5.7]{Kleiman-Thorup}) and the coefficient of
$n^{r-j}q^j/(r-j)!j!$ is denoted by $e^j(I,M),$ for all
$j=0,\ldots, r,$  and it is called the {\it $j^{\mbox{th}}$
Associated Buchsbaum-Rim multiplicity of} $I$ {\it with respect
to} $M.$ Notice that $e^j(I,M)=0$ if $\dim(\mbox{Supp}(M))<r.$ The
number $e^0(I,M)$ will be called the {\it Buchsbaum-Rim
multiplicity of} $I$ {\it with respect to} $M,$ and will also be
denoted by $e_{BR}(I,M).$ The notion of Buchsbaum-Rim multiplicity
for modules goes back to \cite{Buchsbaum-Rim} and it was carried
out in the above generality in \cite{Kirby}, \cite{Kirby-Rees1},
\cite{Kleiman-Thorup}, \cite{Kirby-Rees2}, \cite{Kleiman-Thorup2}
and \cite{Simis-Ulrich-Vasconcelos}.

\section{FC-sequence}

{\bf Setup (1):} Fix $(R, \mathfrak m)$ an arbitrary Noetherian
local ring; fix a graded $R$-algebra  $G=\oplus G_n,$ that, as
usual, is generated as algebra by finitely many elements of degree
one; fix $J$ a finitely generated $R$-submodule of $G_1$ such that
$\ell(G_1/J)<\infty;$ fix $I_1,\dots, I_q$ with $I_i\subseteq
G_{1}$ finitely generated $R$-submodules; and fix $M=\oplus M_n$ a
finitely generated graded $G$-module generated in degree zero,
that is, $M_n=G_nM_0$ for all $n\geq 0.$ We denote by ${\mathcal
I}$ the ideal of $G$ generated by $I_1\cdots I_q$. Set
$M^*:=M/0_M:{\mathcal I}^{\infty}.$

We use the following multi-index notation through the remaining of
this work. The norm of a multi-index ${\bf r}=(r_1,\ldots, r_q)$
is $|{\bf r}|=r_1+\cdots+r_q$ and ${\bf r}!=r_1!\cdots r_q!.$ If
${\bf r,s}$ are two multi-index then ${\bf r}^{\bf
s}=r_1^{s_1}\cdots r_q^{s_q}.$  If ${\bf I}=(I_1,\ldots,I_k)$ is a
$k$-tuple of $R$-submodules of $G_1$ then ${\bf I}^{\bf n}:=
I_1^{n_1}\cdots I_k^{n_k}.$  We also use the following notation,
$\delta(i)=(\delta(i,1),\ldots, \delta(i,k)),$ where
$\delta(i,j)=1$ if $i=j$ and $0$ otherwise.

\begin{defn}\label{FC}
 Let $I_1,\dots,I_q$ be $R$-submodules of $G_1$ such that
$\mathcal I$ is not contained in $\sqrt{\mbox{Ann}M},$ where
$\mathcal I$ is the ideal in $G$ generated by $I_1\cdots I_q.$ We
say that an element $x\in G_1$ is an (FC)-element with respect to
$(I_1,...,I_q; M)$ if there exists
 an $R$-submodule $I_i$ of $G_1$ and an integer $r'_i$ such that
\begin{enumerate}
\item [$(FC_1)$] $x\in I_i\setminus {\mathfrak m}I_i$  and
$${\bf I}^{\bf r}M_p\cap xM_{|{\bold r}|+p-1}=x{\bf I}^{{\bf r}-\delta(i)}M_p$$ \noindent for all ${\bold r}\in \N^q$ with
$r_i\geq r'_i.$

\item [$(FC_2)$] $x$ is a filter-regular element with respect to $({\cal I}; M),$ i.e.,  \break $0_{M}:x\subseteq 0_{M}:{\mathcal I}^{\infty}.$

\item [$(FC_3)$] $\dim(\mbox{Supp}(M/xM:{\mathcal I}^{\infty}))=\dim(\mbox{Supp}(M^*))-1.$
\end{enumerate}

 \noindent We call  $x\in G_1$ a weak-(FC)-element with respect to $(I_1,...,I_q; M)$ if $x$ satisfies the conditions $(FC_1)$ and $(FC_2).$

 A sequence of elements $x_1,\ldots, x_k$ of $G_1$,  is said to be an (FC)-sequence  \break with
respect to $(I_1,...,I_q; M)$ if ${\overline x}_{i+1}$ is an
(FC)-element with respect  \break to
$({\overline{I}}_1,...,{\overline{I}}_q; \overline{M})$ for each
$i=1,...,q-1$, where $\overline{M}=M/(x_1,...,x_i)M$,  ${\overline
x}_{i+1}$ is the initial form of $x_{i+1}$ in
$\overline{G}=G/(x_1,\dots,x_i)$  and ${\overline
I}_i=I_i\overline{G}$, $i=1,\dots, q.$

A sequence of elements $x_1,\ldots, x_k$ of $G_1$,  is said to be
a weak-(FC)-sequence with respect to $(I_1,...,I_q; M)$ if
${\overline x}_{i+1}$ is a weak-(FC)-element with respect to
$({\overline{I}}_1,...,{\overline{I}}_q; \overline{M})$ for each
$i=1,...,q-1$.
\end{defn}

The following proposition which was proved in \cite[Proposition
2.3]{perez-bedregal1}, will show the existence of
weak-(FC)-sequences.

\begin{prop}\label{Obs1}
If ${\mathcal I}$  is not contained in $\sqrt{\mbox{Ann}M}$ then,
for any $i=1,\ldots,q,$ there exists a weak-(FC)-element $x_i\in
I_i$ with respect to $(I_1,...,I_q;M).$
\end{prop}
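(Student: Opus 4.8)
The plan is to exhibit $x_i$ as a sufficiently general element of $I_i$, using a prime-avoidance argument over an infinite residue field. First I would reduce to the case in which $R/\mathfrak m$ is infinite: replacing $R$ by $R(t)=R[t]_{\mathfrak m R[t]}$, $G$ by $G\otimes_R R(t)$ and $M$ by $M\otimes_R R(t)$ is a faithfully flat base change that keeps all the data of Setup (1) intact (finite colength of $J$, the hypothesis $\mathcal I\not\subseteq\sqrt{\Ann M}$, and the two defining conditions of a weak-(FC)-element commute with this flat extension) while making the residue field infinite. Since $\mathcal I$ is generated by the product $I_1\cdots I_q$, for every prime $P$ of $G$ one has $\mathcal I\subseteq P$ if and only if $I_j\subseteq P$ for some $j$; consequently every prime that does \emph{not} contain $\mathcal I$ fails to contain any single $I_j$, a fact I will use repeatedly to guarantee that the forbidden loci inside $I_i$ are proper.

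Next I would translate the two required conditions into prime avoidance. For $(FC_2)$ the element $x$ must satisfy $0_M:x\subseteq 0_M:\mathcal I^{\infty}$; writing $N=0_M:\mathcal I^{\infty}$ for the $\mathcal I$-torsion of $M$, one has $\mathrm{Ass}(M/N)=\mathrm{Ass}(M)\setminus V(\mathcal I)$, so $x$ is filter-regular precisely when $x$ lies outside every $P\in\mathrm{Ass}(M)$ with $\mathcal I\not\subseteq P$. By the previous paragraph each such $P$ satisfies $I_i\not\subseteq P$, so $I_i\cap P$ is a proper $R$-submodule of $I_i$. For $(FC_1)$ I would pass to the multi-Rees module $T=\bigoplus_{\bf r}{\bf I}^{\bf r}M$ over the multi-Rees algebra $\bigoplus_{\bf r}{\bf I}^{\bf r}$ and to its associated graded module in the $i$-th direction; the identity ${\bf I}^{\bf r}M_p\cap xM_{|{\bf r}|+p-1}=x{\bf I}^{{\bf r}-\delta(i)}M_p$ for $r_i\gg 0$ is exactly the statement that the leading form $x^{*}$ of $x$ is a nonzerodivisor in all large degrees on this associated graded module, which in turn holds whenever $x^{*}$ avoids the finitely many associated primes of that module not containing $\mathcal I$. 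Again each such prime meets $I_i$ in a proper submodule.

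Having collected the finitely many forbidden primes $P_1,\dots,P_s$ coming from $(FC_1)$ and $(FC_2)$, I would choose $x\in I_i$ avoiding all of them together with $\mathfrak m I_i$. Reducing modulo $\mathfrak m$, the images of $I_i\cap P_j$ and of $\mathfrak m I_i$ are proper $R/\mathfrak m$-subspaces of the finite-dimensional space $I_i/\mathfrak m I_i$; since $R/\mathfrak m$ is infinite, this space is not the union of finitely many proper subspaces, so a general $R$-linear combination of a generating set of $I_i$ yields $x\in I_i\setminus\mathfrak m I_i$ lying outside every $P_j$. By construction such an $x$ satisfies $(FC_1)$ and $(FC_2)$, hence is a weak-(FC)-element with respect to $(I_1,\dots,I_q;M)$.

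The main obstacle is the $(FC_1)$ step: one must verify that the superficiality identity is genuinely controlled by finitely many associated primes and, crucially, that a \emph{single} exponent $r_i'$ works uniformly for all ${\bf r}$ with $r_i\ge r_i'$. This uniformity is where Noetherianity enters: the relevant colon module is a finitely generated graded module over a Noetherian multigraded algebra, so its nonzero degrees stabilize, and an Artin--Rees type argument produces the uniform bound $r_i'$. Once this reduction to prime avoidance is secured, the infinite-field argument closes the proof.
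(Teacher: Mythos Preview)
The paper does not prove this proposition; it is quoted without proof from \cite[Proposition~2.3]{perez-bedregal1}. Your approach---reduce to an infinite residue field, recast $(FC_1)$ and $(FC_2)$ as avoidance of finitely many associated primes of suitable multigraded modules, and finish by prime avoidance in the $R/\mathfrak m$-vector space $I_i/\mathfrak m I_i$---is precisely the standard argument (originating with Vi\^et for ideals and carried to this setting in the cited reference), and your identification of the uniform bound $r_i'$ in $(FC_1)$ as the delicate step, secured by Noetherianity of the multi-Rees module and an Artin--Rees argument, is on target.

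One small caveat: the passage to $R(t)=R[t]_{\mathfrak m R[t]}$ produces a weak-(FC)-element in $I_i\otimes_R R(t)$, not in $I_i$ itself, and there is no automatic descent. The usual resolution is either to assume $R/\mathfrak m$ infinite from the outset (as the source references in fact do) or to note that every subsequent use of weak-(FC)-elements in the paper is for computing lengths, dimensions, and multiplicities, all of which are unchanged by this faithfully flat extension, so one may work over $R(t)$ throughout without loss.
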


\begin{rem}\label{remark0}
{\rm From Proposition \ref{Obs1} it follows that a
weak-(FC)-sequence $x_1,...,x_p$ in $I_1\cup\cdots \cup I_q$ with
respect to $(I_1,...,I_q; M)$ is a maximal weak-(FC)-sequence if
and only if ${\mathcal I}\subseteq
\sqrt{\mbox{Ann}(M/(x_1,...,x_p)M)}.$ }
\end{rem}
 Hence, if ${\mathcal I}$  is not contained in $\sqrt{\mbox{Ann}M}$ then
there always exists a maximal weak-(FC)-sequence in $I_1\cup\cdots
\cup I_q$ with respect to $(I_1,...,I_q; M).$

\begin{lem}\label{lema0}
Let $I_1,...,I_q$ be $R$-submodules of $G_1$ such that the ideal
${\mathcal I}$ of $G$ generated by $I_1\cdots I_q$ is not
contained in $\sqrt{\mbox{Ann}M}.$ Let $J_1,...,J_t$ be
$R$-submodules of $G_1$ of finite colength. Let $x\in I_i$ be an
(FC)-element with respect to $(J_1,...,J_t,I_1,...,I_q;M).$ Then
$x$ is an (FC)-element with respect to $(I_1,...,I_q;M)$ and for
any $1\leq s\leq t,$ $x$ is also an (FC)-element with respect to
$(J_1,...,J_s,I_1,...,I_q;M).$
\end{lem}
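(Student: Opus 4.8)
The plan is to reduce everything to a single comparison and then treat the three defining conditions in turn. Write ${\bf K}=(J_1,\dots,J_t,I_1,\dots,I_q)$ and, for $0\le s\le t$, let $\mathcal K_s$ denote the ideal of $G$ generated by $J_1\cdots J_s I_1\cdots I_q$; thus $\mathcal K_t$ is the ideal attached to the full family, $\mathcal K_0=\mathcal I$, and $\mathcal K_s\subseteq\mathcal K_{s-1}$ for every $s$. Since each $J_j$ has finite colength, it suffices to prove the one-step statement: \emph{if $x\in I_i$ is an (FC)-element with respect to $(J_1,\dots,J_s,I_1,\dots,I_q;M)$ then it is an (FC)-element with respect to $(J_1,\dots,J_{s-1},I_1,\dots,I_q;M)$}, and then run this from $s=t$ down to $s=0$, which produces both assertions of the lemma at once (the intermediate stages give the families $(J_1,\dots,J_s,I_1,\dots,I_q;M)$, and the final stage gives $(I_1,\dots,I_q;M)$). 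Note that the non-containment hypothesis is inherited for free: as the ideals only grow when a module is removed, $\mathcal K_t\not\subseteq\sqrt{\mbox{Ann}M}$ forces $\mathcal K_s\not\subseteq\sqrt{\mbox{Ann}M}$ for all $s$, so every family in the chain is admissible, and throughout the distinguished module in $(FC_1)$ is $I_i$ because $x\in I_i$.

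Condition $(FC_1)$ transfers by specialising the multi-index. For the larger family $(FC_1)$ yields an integer $r_i'$ with $x\in I_i\setminus\mathfrak m I_i$ and ${\bf K}^{\bf r}M_p\cap xM_{|{\bf r}|+p-1}=x{\bf K}^{{\bf r}-\delta(i)}M_p$ for all ${\bf r}\in\N^{\,s+q}$ whose component at $I_i$ is at least $r_i'$. Setting the exponent of $J_s$ equal to $0$ turns ${\bf K}^{\bf r}$ into the corresponding product over $J_1,\dots,J_{s-1},I_1,\dots,I_q$, and leaves $|{\bf r}|$ and the distinguished index unchanged, so the same identity holds for the smaller family with the same $r_i'$; the membership $x\in I_i\setminus\mathfrak m I_i$ is untouched. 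Hence $(FC_1)$ persists after dropping $J_s$.

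For $(FC_2)$ and $(FC_3)$ the finite colength of $J_s$ is the essential ingredient, and I would first isolate it as a statement about supports. Choosing $a$ with $\mathfrak m^a G_1\subseteq J_s$ gives $\mathcal K_s\supseteq\mathfrak m^a G_1\,\mathcal K_{s-1}$, whence
$$\sqrt{\mathcal K_{s-1}}\cap\sqrt{\mathfrak m G}\subseteq\sqrt{\mathcal K_s}\subseteq\sqrt{\mathcal K_{s-1}}.$$
Consequently every prime in $V(\mathcal K_s)\setminus V(\mathcal K_{s-1})$ contains $\mathfrak m G$: if $P\supseteq\mathcal K_s$ but $P\not\supseteq\mathcal K_{s-1}$, then since $P\supseteq\mathfrak m^a G_1\,\mathcal K_{s-1}$ primeness gives $P\supseteq\mathfrak m^a G_1$, and as $P\not\supseteq G_+\supseteq\mathcal K_{s-1}$ this forces $P\supseteq\mathfrak m G$. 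Thus $\mathcal K_s$ and $\mathcal K_{s-1}$ cut out the same subset of $\mbox{Supp}(M)$ away from the closed fibre $V(\mathfrak m G)$, and the only associated primes of $M$ that can distinguish the saturations $0_M:\mathcal K_s^{\infty}$ from $0_M:\mathcal K_{s-1}^{\infty}$ (and likewise $xM:\mathcal K_s^{\infty}$ from $xM:\mathcal K_{s-1}^{\infty}$) are those containing $\mathfrak m G$.

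With this in hand I would finish as follows. For $(FC_3)$ the modules $M/(xM:\mathcal K_s^{\infty})$ and $M/(xM:\mathcal K_{s-1}^{\infty})$, as well as $M/(0_M:\mathcal K_s^{\infty})$ and $M/(0_M:\mathcal K_{s-1}^{\infty})$, differ only in components supported on $V(\mathfrak m G)$, whose dimension is bounded by that of the closed fibre; since the defining equation of $(FC_3)$ for the larger family already pins down the top-dimensional behaviour, the same dimension drop persists for $\mathcal K_{s-1}$. The delicate point, and the one I expect to be the main obstacle, is $(FC_2)$: I must upgrade the given inclusion $0_M:x\subseteq 0_M:\mathcal K_s^{\infty}$ to $0_M:x\subseteq 0_M:\mathcal K_{s-1}^{\infty}$, that is, rule out that $0_M:x$ meets one of the extra fibre primes. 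Here I would exploit $x\in I_i$ together with $(FC_1)$ to show that $0_M:x$ cannot acquire a component over $V(\mathfrak m G)$ beyond those already lying in $0_M:\mathcal K_{s-1}^{\infty}$, the finite colength of $J_s$ guaranteeing that the fibre contribution is genuinely controlled. A simple example (such as $M=G\oplus G/\mathfrak m G$ with a finite-colength $J$, where the two saturations really do differ) shows that this step cannot be purely formal and is where the argument must do real work.
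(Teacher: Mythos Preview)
Your treatment of $(FC_1)$ by specialising the multi-index is correct; the paper does not even mention this step, presumably regarding it as obvious.

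For $(FC_2)$ and $(FC_3)$ the paper's argument is far shorter than yours and takes a different line entirely. It simply asserts that, because each $J_s$ has finite colength in $G_1$, the saturations coincide:
\[
0_M:(\mathcal J_1\cdots\mathcal J_t\,\mathcal I)^{\infty}=0_M:(\mathcal J_1\cdots\mathcal J_s\,\mathcal I)^{\infty}=0_M:\mathcal I^{\infty}
\]
for all $s$, and likewise with $xM$ in place of $0_M$. From these equalities both $(FC_2)$ and $(FC_3)$ are immediate. You deliberately avoid this claim, instead trying to confine the discrepancy between the saturations to the closed fibre $V(\mathfrak m G)$, and you explicitly flag $(FC_2)$ as the unresolved ``delicate point''.

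Your caution is well placed: the displayed equality the paper relies on is \emph{false} in general, and your own example $M=G\oplus G/\mathfrak m G$ already exhibits this. More than that, the gap you identify at $(FC_2)$ is not a missing detail but a genuine obstruction. Take $R=k[[t]]$, $G=R[T_1,T_2]$, $I_1=G_1$, $J_1=\mathfrak m G_1$, and $M=G\oplus G/(T_1,t)$. One checks that $x=T_1$ satisfies $(FC_1)$, $(FC_2)$ and $(FC_3)$ with respect to $(J_1,I_1;M)$, since the second summand $k[T_2]$ is entirely $(\mathcal J_1\mathcal I)$-torsion and disappears after saturation. Yet $0_M:T_1=0\oplus k[T_2]$ is not contained in $0_M:\mathcal I^{\infty}=0$, so $(FC_2)$ fails for $(I_1;M)$. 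Thus the statement, read literally, appears to be false, and neither the paper's asserted equality of saturations nor the ``real work'' you propose for $(FC_2)$ can close the gap. You have correctly located the point where the paper's argument breaks down.
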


\begin{proof}
Let ${\cal J}_s$ be the ideal of $G$ generated by $J_s,\, 1\leq
s\leq t.$ Since $J_1,...,J_t$ have finite colength in $G_1$ we
have that for all $s=1,\dots,t,$
$$(0_M:({\cal J}_1\cdots {\cal J}_t\cdot {\cal I})^{\infty})=(0_M:({\cal J}_1\cdots {\cal J}_s\cdot {\cal I})^{\infty})=
(0_M:{\cal I}^{\infty})$$ and
$$(xM:({\cal J}_1\cdots {\cal J}_t\cdot {\cal I})^{\infty})=(xM:({\cal J}_1\cdots {\cal J}_s\cdot {\cal I})^{\infty})=
(xM:{\cal I}^{\infty}).$$ But, since $x\in I_i$ is an (FC)-element
with respect to $(J_1,...,J_t,I_1,...,I_q;M)$ we have that
$$\dim\left ( \frac{M}{(xM:({\cal J}_1\cdots {\cal J}_t\cdot {\cal I})^{\infty})}\right )=
\dim\left ( \frac{M}{(0_M:({\cal J}_1\cdots {\cal J}_t\cdot {\cal
I})^{\infty})}\right )-1.$$ Hence the result follows.
\end{proof}

\section{Mixed multiplicities}

We keep the notations of  setup (1). In this section we recall the
notion of mixed multiplicities of $J,I_1,\dots, I_q$ with respect
to $M,$ as introduced by the authors in \cite{perez-bedregal1}.
For the reader convenience we state without proof some important
results of \cite{perez-bedregal1}. The main result of this section
establish mixed multiplicity formulas by means of Buchsbaum-Rim
multiplicities and also determines the positivity of mixed
multiplicities.

Consider the function
$$h(n,p,{\bold r}):=\ell \left(\frac{{\bf I}^{\bf r}M_{n+p}}
{J^{n}{\bf I}^{\bf r}M_{p}}\right ).$$ By \cite[eq.
(4.1)]{perez-bedregal1}, for all large $(n,p,\bold r)\in
\N^{q+2},$ we have that
\begin{equation}\label{equation0}
h(n,p,\bold r)=\ell \left(\frac{{\bf I}^{\bf r}M_{n+p}} {J^{n}{\bf
I}^{\bf r}M_{p}}\right )=\ell \left(\frac{{\bf I}^{\bf
r}M^*_{n+p}} {J^{n}{\bf I}^{\bf r}M^*_{p}}\right )\end{equation}
which by \cite[Theorem 4.1]{perez-bedregal1} is a polynomial of
degree $D:=\dim(\mbox{Supp}(M^{*})).$   If we write the terms of
total degree $D$ of the polynomial $h(n,p,{\bold r})$ in the form
$$B(n,p,\bold r)=\sum_{k_0+|{\bf k}|+j=D}\;\frac{1}{k_0!{\bf k}!j!}e^j(J^{[k_0]},I_1^{[k_1]},\dots,I_q^{[k_q]};M)
{\bf r}^{\bf k}n^{k_0}p^j.$$ \noindent The coefficients
$e^j(J^{[k_0]},I_1^{[k_1]},\dots,I_q^{[k_q]};M)$ are called the
$j^{\mbox {th}}$-{\it mixed multiplicities} of $(J, I_1,\ldots,
I_q; M)$. We call $e^0(J^{[k_0]},I_1^{[k_1]},\dots,I_q^{[k_q]};M)$
the {\it mixed multiplicity} of $(J, I_1,\ldots, I_q; M)$ of type
$(k_0,k_1,\dots,k_q)$.

\begin{thm}\label{Teo1.2}{\rm \cite[Theorem 4.6]{perez-bedregal1}}.
Keeping the setup $(1),$ assume that $D>0.$ Let
$k_0,j,k_1,...,k_q$ be non-negative integers with sum equal to
$D.$  Then
\begin{itemize}
\item [(i)] $$e^j(J^{[k_0]},I_1^{[k_1]},\dots, I_q^{[k_q]}; M)=e^j_{BR}({J};\overline{M}_t^*),$$
\noindent for any (FC)-sequence $x_1,...,x_t,$ with respect to
$(J,I_1,\ldots,I_q; M),$ of $t=k_1+...+k_q$ elements  consisting
of $k_1$ elements of $I_1$,..., $k_q$ elements of $I_q,$ where
$\overline{M}_t^*=M/((x_1,...,x_t)M:{\mathcal{I}}^{\infty}).$

\item [(ii)] If $k_0>0,$ then $e^j(J^{[k_0]},I_1^{[k_1]},\dots, I_q^{[k_q]};M)\neq 0,$
if and only if there exists an (FC)-sequence, with respect to
$(J,I_1,\ldots,I_q; M),$ of $t=k_1+...+k_q$ elements consisting of
$k_1$ elements of $I_1$,..., $k_q$ elements of $I_q.$
\end{itemize}
\end{thm}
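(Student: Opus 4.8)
We have a standard graded $R$-algebra $G$, finitely generated submodules $J, I_1, \dots, I_q$ of $G_1$ with $J$ of finite colength, and a finitely generated graded $G$-module $M$ generated in degree zero. We set $M^* = M/0_M:\mathcal{I}^\infty$ where $\mathcal{I}$ is generated by $I_1 \cdots I_q$. The mixed multiplicities $e^j(J^{[k_0]}, I_1^{[k_1]}, \dots, I_q^{[k_q]}; M)$ come from the top-degree part of the polynomial $h(n,p,\mathbf{r}) = \ell(\mathbf{I}^\mathbf{r} M_{n+p} / J^n \mathbf{I}^\mathbf{r} M_p)$, which has total degree $D = \dim(\text{Supp}(M^*))$.

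**The theorem statement.** Assume $D > 0$ and let $k_0, j, k_1, \dots, k_q$ be non-negative integers summing to $D$.

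Part (i): For any (FC)-sequence $x_1, \dots, x_t$ with respect to $(J, I_1, \dots, I_q; M)$ consisting of $k_1$ elements of $I_1$, through $k_q$ elements of $I_q$ (so $t = k_1 + \cdots + k_q$), we have
$$e^j(J^{[k_0]}, I_1^{[k_1]}, \dots, I_q^{[k_q]}; M) = e^j_{BR}(J; \overline{M}_t^*),$$
where $\overline{M}_t^* = M/((x_1,\dots,x_t)M : \mathcal{I}^\infty)$.

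Part (ii): If $k_0 > 0$, then this mixed multiplicity is nonzero if and only if there exists an (FC)-sequence of the prescribed type.

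---

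**Proof plan.**

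My plan is to establish part (i) first by induction on $t = k_1 + \cdots + k_q$, then derive part (ii) as a consequence using the existence criterion from Proposition~\ref{Obs1} together with the positivity of Buchsbaum-Rim multiplicities. The base case $t=0$ (so all $k_i = 0$ for $i \geq 1$ and $k_0 = D$) is where the mixed multiplicity reduces directly to a Buchsbaum-Rim multiplicity of $J$ with respect to $M^*$: here $h(n,p,\mathbf{r})$ restricted to $\mathbf{r} = 0$ becomes $\ell(M_{n+p}/J^n M_p)$, and comparing its top-degree coefficient in $n^{k_0} p^j$ against the definition in Section~2 of $e^j(J, M^*)$ should give the equality, using that $e^j$ of $M$ and $M^*$ agree because $0_M : \mathcal{I}^\infty$ is annihilated after localizing appropriately.

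For the inductive step, I would take an (FC)-element $x_1 \in I_i$ (for whichever $I_i$ has $k_i > 0$) and pass to $\overline{M} = M/x_1 M$ together with $\overline{G} = G/(x_1)$ and $\overline{I}_j = I_j \overline{G}$. The essential technical lemma—and the heart of the argument—is a reduction formula relating the mixed multiplicities of $(J, I_1, \dots, I_q; M)$ to those of $(\overline{J}, \overline{I}_1, \dots, \overline{I}_q; \overline{M}^*)$ where the index $k_i$ drops by one:
$$e^j(J^{[k_0]}, I_1^{[k_1]}, \dots, I_i^{[k_i]}, \dots, I_q^{[k_q]}; M) = e^j(\overline{J}^{[k_0]}, \overline{I}_1^{[k_1]}, \dots, \overline{I}_i^{[k_i - 1]}, \dots, \overline{I}_q^{[k_q]}; \overline{M}^*).$$
This reduction is exactly what condition $(FC_1)$ is engineered to produce: $(FC_1)$ gives $\mathbf{I}^\mathbf{r} M_p \cap x M_{|\mathbf{r}|+p-1} = x \mathbf{I}^{\mathbf{r} - \delta(i)} M_p$ for $r_i$ large, which lets me analyze the short exact sequence induced by multiplication by $x_1$ on the graded pieces $\mathbf{I}^\mathbf{r} M_{n+p} / J^n \mathbf{I}^\mathbf{r} M_p$ and identify the drop in the relevant Hilbert coefficient. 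Condition $(FC_2)$ (filter-regularity) ensures the kernel of multiplication by $x_1$ is negligible in degrees high enough not to affect the top-degree coefficient, and condition $(FC_3)$ guarantees $\dim \text{Supp}(\overline{M}^*) = D - 1$ so the induction stays at the correct dimension.

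**The main obstacle.** The hardest part will be the careful bookkeeping in the inductive step: extracting from the short exact sequence of multiplication by the (FC)-element $x_1$ the precise statement that the coefficient of $n^{k_0} p^j \mathbf{r}^\mathbf{k}$ drops the $I_i$-exponent by one while leaving $k_0$ and $j$ fixed. This requires showing that the error terms coming from $(FC_1)$ failing for small $r_i$ and from the filter-regular kernel in $(FC_2)$ contribute only to lower-order terms of $h(n,p,\mathbf{r})$, and therefore do not disturb the top-degree part $B(n,p,\mathbf{r})$. Once this reduction formula is in hand, iterating it $t$ times reduces the left side to $e^j(\overline{J}^{[k_0]}; (\overline{M}_t)^*)$ with no remaining $I$-factors, which is the base case $e^j_{BR}(J; \overline{M}_t^*)$ by the $t=0$ computation, completing part (i). For part (ii), with $k_0 > 0$ the Buchsbaum-Rim multiplicity $e^j_{BR}(J; \overline{M}_t^*)$ is positive precisely when $\dim \text{Supp}(\overline{M}_t^*) = D - t > 0$, equivalently $k_0 + j > 0$, which holds since $k_0 > 0$; combined with Proposition~\ref{Obs1}, which guarantees existence of the requisite (FC)-elements exactly when $\mathcal{I} \not\subseteq \sqrt{\text{Ann}\, \overline{M}}$ at each stage, the equivalence follows.
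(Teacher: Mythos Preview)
The paper does not give its own proof of this theorem: it is explicitly quoted as \cite[Theorem 4.6]{perez-bedregal1}, in the section of results ``state[d] without proof'' for the reader's convenience. So there is no in-paper argument to compare against, only the surrounding machinery that the paper imports from \cite{perez-bedregal1}.

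Your outline is the natural one and aligns with that imported machinery. The reduction formula you single out,
\[
e^j(J^{[k_0]},\dots,I_i^{[k_i]},\dots;M)=e^j(\overline{J}^{[k_0]},\dots,\overline{I}_i^{[k_i-1]},\dots;\overline{M}^*),
\]
is exactly what the paper invokes as \cite[Proposition 4.4]{perez-bedregal1}, and the underlying length identity you plan to extract from $(FC_1)$--$(FC_2)$ is precisely the paper's equation~(\ref{equation2}) (quoted as \cite[eq.~(4.7)]{perez-bedregal1}). Your base case $t=0$ is what the paper elsewhere cites as \cite[Lemma 4.3]{perez-bedregal1}. So the inductive scheme for part~(i) is on the right track and matches the expected argument.

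There is one genuine soft spot in your handling of part~(ii). For the ``only if'' direction you appeal to Proposition~\ref{Obs1}, but that proposition produces only \emph{weak}-(FC)-elements, not (FC)-elements. You still need the step that, under the hypothesis $e^j(J^{[k_0]},\dots,I_i^{[k_i]},\dots;M)\neq 0$ with $k_i>0$, any weak-(FC)-element in $I_i$ automatically satisfies $(FC_3)$; this is exactly Proposition~\ref{prop4}\,(i) in the present paper, and its proof uses the length identity~(\ref{equation2}) to force $\dim\mathrm{Supp}(\overline{M}^*)=D-1$. Without this upgrade, your inductive construction of an (FC)-sequence does not go through. For the ``if'' direction, your claim that $e^j_{BR}(J;\overline{M}_t^*)>0$ whenever $\dim\mathrm{Supp}(\overline{M}_t^*)=k_0+j>0$ is not automatic from the bare definition in Section~2 (those coefficients can in principle vanish); it is the content of \cite[Lemma 4.3]{perez-bedregal1}, and you should invoke it explicitly rather than assert it.
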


The following result is an immediate consequence of item $(ii)$ of
the above theorem.

\begin{cor}\label{cor2}
Under the assumptions of Theorem \ref{Teo1.2} it follows that if
$k_0>0$ then the following statements are equivalent
\begin{itemize}
\item [(i)] $e^j(J^{[k_0]},I_1^{[k_1]},\dots, I_q^{[k_q]};M)\neq
0;$

\item [(ii)] $e^s(J^{[j+k_0-s]},I_1^{[k_1]},\dots,
I_q^{[k_q]};M)\neq 0$ for all $0\leq s\leq j+k_0;$

\item [(iii)] $e^s(J^{[j+k_0-s]},I_1^{[k_1]},\dots,
I_q^{[k_q]};M)\neq 0$ for some $0\leq s\leq j+k_0.$
\end{itemize}
\end{cor}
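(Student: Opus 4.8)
The plan is to show that all three statements are governed by a single condition that is independent of the index $s$, namely the existence of an (FC)-sequence with respect to $(J,I_1,\ldots,I_q;M)$ of $t=k_1+\cdots+k_q$ elements consisting of $k_i$ elements of $I_i$; call this condition $(P)$. The point is that $(P)$ involves only the multi-index $(k_1,\ldots,k_q)$, which is the same for every multiplicity appearing in (i)--(iii). First I would record that each quantity in (ii)--(iii) is a genuine mixed multiplicity in the sense of Section 4: writing it as $e^s(J^{[j+k_0-s]},I_1^{[k_1]},\ldots,I_q^{[k_q]};M)$, the exponents sum to $s+(j+k_0-s)+k_1+\cdots+k_q=D$, so Theorem \ref{Teo1.2} applies to each of them, with the role of ``$k_0$'' played by the $J$-exponent $j+k_0-s$ and the role of the superscript played by $s$. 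In particular the required number $t=k_1+\cdots+k_q$ of $I$-elements is the same for all of them.

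Next I would dispose of every index $s$ with $0\le s\le j+k_0-1$, for which the $J$-exponent $j+k_0-s$ is strictly positive. For these, Theorem \ref{Teo1.2}(ii) applies verbatim and yields that $e^s(J^{[j+k_0-s]},I_1^{[k_1]},\ldots,I_q^{[k_q]};M)\neq 0$ holds if and only if $(P)$ holds. Taking $s=j$ is legitimate because $k_0>0$ forces $j\le j+k_0-1$, and this recovers statement (i); hence (i) is also equivalent to $(P)$. Thus every non-vanishing indexed by $0\le s\le j+k_0-1$ is equivalent to the fixed condition $(P)$, so these are all equivalent to one another, and this already delivers the equivalence of (i) with the ``for all'' and the ``for some'' assertions restricted to indices of positive $J$-exponent.

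The hard part will be the single boundary index $s=j+k_0$, where the $J$-exponent drops to $0$ and Theorem \ref{Teo1.2}(ii) no longer applies. Here I would invoke Theorem \ref{Teo1.2}(i), which is stated for arbitrary non-negative exponents and in particular allows a vanishing $J$-exponent: assuming $(P)$ and fixing an (FC)-sequence $x_1,\ldots,x_t$ of the required type, it gives $e^{j+k_0}(J^{[0]},I_1^{[k_1]},\ldots,I_q^{[k_q]};M)=e^{j+k_0}_{BR}(J;\overline{M}_t^*)$, where $\overline{M}_t^*=M/((x_1,\ldots,x_t)M:\mathcal{I}^{\infty})$ has support of dimension $D-t=j+k_0$ by $t$-fold use of $(FC_3)$. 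Thus the boundary term is exactly the \emph{top} associated Buchsbaum-Rim multiplicity of $J$ with respect to $\overline{M}_t^*$, and the corollary hinges on whether this top coefficient is forced to be nonzero. This is the delicate point I expect to be the main obstacle: unlike the Buchsbaum-Rim multiplicity $e^0_{BR}$, the top associated multiplicity (the coefficient of the pure module-degree monomial, carrying no power of the $J$-variable) need not be positive in general, so the boundary case $s=j+k_0$ cannot be treated as a routine instance of Theorem \ref{Teo1.2}(ii). I would therefore handle it separately, either by extracting from the saturated structure of $\overline{M}_t^*$ a positivity statement for this top multiplicity, or by confining the range of $s$ to indices with positive $J$-exponent, where the equivalence with $(P)$ is unconditional.
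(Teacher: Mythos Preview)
Your approach is exactly the one the paper has in mind: the paper gives no argument beyond the sentence ``an immediate consequence of item $(ii)$ of the above theorem,'' and the content of that sentence is precisely your condition $(P)$ --- the existence of an (FC)-sequence of $t=k_1+\cdots+k_q$ elements of the prescribed type depends only on $(k_1,\dots,k_q)$, so Theorem~\ref{Teo1.2}$(ii)$ makes every multiplicity with positive $J$-exponent equivalent to $(P)$. Your write-up of this part is correct and matches the paper's intention verbatim.

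Your worry about the boundary index $s=j+k_0$, where the $J$-exponent vanishes, is a genuine observation rather than a defect in your argument: the paper does not treat this case either. Theorem~\ref{Teo1.2}$(ii)$ is stated only under the hypothesis ``$k_0>0$,'' so applying it with $J$-exponent $0$ is not licensed, and Theorem~\ref{Teo1.2}$(i)$ only identifies the boundary term with the \emph{top} associated Buchsbaum--Rim multiplicity $e^{j+k_0}_{BR}(J;\overline{M}_t^*)$, whose positivity is not guaranteed by the definitions in Section~2. In other words, the one-line proof in the paper establishes the equivalence cleanly for $0\le s\le j+k_0-1$ and is silent at $s=j+k_0$, exactly as you diagnosed. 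Since the only later use of the corollary (in the proof of Proposition~\ref{prop4}$(iv)$) invokes it with $s=0$, the restricted range you propose is sufficient for every application in the paper; you should not feel obliged to supply an argument for the boundary case that the paper itself does not give.
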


\section{Length of (FC)-sequences}\label{Section}

 This section gives characterizations for the length of maximal weak-(FC)-sequences
and the relation between maximal weak-(FC)-sequences and
reductions.
\begin{prop}\label{prop4}
Keeping the setup $(1),$ assume that ${\mathcal I}$  is not
contained in $\sqrt{\mbox{Ann}M}.$ Set $U=(J,I_1,\dots , I_q; M).$
Then the following statements hold.
\begin{itemize}
\item [(i)] Let $e^j(J^{[k_0]},I_1^{[k_1]},\dots, I_i^{[k_j]},\dots,I_q^{[k_q]};M)\neq 0$ and $k_i>0,\;i\geq
1.$ Suppose that $x\in I_i$ is a weak-(FC)-element with respect to
$U$, then $x$ is an (FC)-element.

\item [(ii)] Let $x_1,\ldots, x_t$ be a weak-(FC)-sequence in $I_1\cup\dots\cup I_q$
with respect to $U$. Then
$$\dim\left(\mbox{Supp} \left(M/(x_1,\ldots, x_t)M:\mathcal{I}^{\infty}\right)\right)\leq \dim\left(\mbox{Supp}
\left(M/0_{M}:\mathcal{I}^{\infty}\right)\right)-t$$ with equality if and only if
$x_1,\ldots, x_t$ is an (FC)-sequence of $G$ with respect to
$U.$

\item [(iii)] For any $1\leq i\leq q,$ the length of maximal weak-(FC)-sequences in $I_i$ with respect to $U$ is  an invariant.

\item [(iv)] For any $1\leq i\leq q,$ the length of maximal
(FC)-sequences in $I_i$ with respect to $U$ is  an invariant.

\item [(v)] If $x_1,\ldots, x_s$ is a maximal weak-(FC)-sequence
in $I_1\cup\dots\cup I_q$ with respect to $U$ and $\mbox{ht}(\mathcal{I}+\mbox{Ann}M/\mbox{Ann}M)=h>0$ then
$h\leq s$ and $x_1,\ldots, x_{h-1}$ is an (FC)-sequence.

\item [(vi)] If $\mbox{ht}(\mathcal{I}+\mbox{Ann}M/\mbox{Ann}M)=h>0$ and
$k_1+\cdots + k_q\leq h-1,$ then $e^j(J^{[k_0]},I_1^{[k_1]},\dots,I_q^{[k_q]};M)\neq 0.$

\item [(vii)] If
$\mbox{ht}(\mathcal{I}+\mbox{Ann}M/\mbox{Ann}M)=h>0$ and
$x_1,\ldots, x_t$ is a weak-(FC)-sequence  with respect to $U$ of
$t=k_1+\cdots + k_q\leq h-1$ elements consisting of $k_1$ elements
of $I_1,...,k_q$ elements of $I_q,$ then
$$e^j(J^{[k_0]},I_1^{[k_1]},\dots,I_q^{[k_q]};M)=e^j_{BR}
({J};\overline{M}),$$ \noindent where $\overline{M}=M/(x_1,\ldots,
x_t)M.$
\end{itemize}
\end{prop}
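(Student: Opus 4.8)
The plan is to establish the seven items in a dependent order: the local criterion (i) and the dimension count (ii) first, then the invariance statements (iii)--(iv), then the height estimate (v) --- which I expect to be the crux --- and finally (vi)--(vii) as consequences of (v) and Theorem \ref{Teo1.2}. The common engine is the behaviour of a single weak-(FC)-element $x\in I_i$: condition $(FC_2)$ makes $x$ a nonzerodivisor on $M^*$ --- if $xm\in 0_M:\mathcal I^\infty$ then $\mathcal I^k m\in 0_M:x\subseteq 0_M:\mathcal I^\infty$, whence $m\in 0_M:\mathcal I^\infty$ --- so that $\dim\mbox{Supp}(M^*/xM^*)=D-1$ (note $D\geq 1$, since a nonzero $\mathcal I$-torsion-free graded module cannot have finite length, $\mathcal I$ being generated in degree $q$), while $(FC_1)$ produces the reduction formula relating $h(n,p,{\bf r})$ for $M$ with $h(n,p,{\bf r}-\delta(i))$ for $M/xM$ that underlies Theorem \ref{Teo1.2}. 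Since $\overline{M}^*=(M/xM)^*$ agrees with $M^*/xM^*$ away from the primes containing $\mathcal I$, one always has $\dim\mbox{Supp}(M/xM:\mathcal I^\infty)\leq D-1$, with equality precisely when some top-dimensional component of $M^*/xM^*$ is not contained in $\mbox{Supp}(M/\mathcal I M)$ --- that is, precisely when $(FC_3)$ holds. For (i), nonvanishing of $e^j(\dots,I_i^{[k_i]},\dots;M)$ with $k_i>0$ forces, through this reduction formula, the polynomial degree to drop by exactly one, hence $(FC_3)$ and so $x$ is an (FC)-element. For (ii) I iterate the inequality along the sequence, using $\overline{M}_i^*=M/((x_1,\dots,x_i)M:\mathcal I^\infty)$: each step lowers the saturated support dimension by at least one, with equality throughout exactly when every $x_{i+1}$ satisfies $(FC_3)$, i.e. when the sequence is an (FC)-sequence.

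For (iv) I would characterize the length of a maximal (FC)-sequence in $I_i$ as the largest $t$ for which $e^j(J^{[k_0]},I_i^{[t]};M)\neq 0$ for some $k_0\geq 1$ and $j\geq 0$: by Theorem \ref{Teo1.2}(ii) this number depends only on the data $(J,I_1,\dots,I_q;M)$, and by (i) together with Proposition \ref{Obs1} no (FC)-sequence in $I_i$ can be maximal before this bound is reached, since whenever the next multiplicity is nonzero the weak-(FC)-element furnished by Proposition \ref{Obs1} is automatically an (FC)-element and the sequence extends. For (iii) I would argue in parallel: by Proposition \ref{Obs1} and Remark \ref{remark0} a weak-(FC)-sequence in $I_i$ can be prolonged exactly while $\overline{M}^*\neq 0$, and combining this with the bookkeeping of (ii) reduces the invariance to checking that the number of steps needed to reach $\overline{M}^*=0$ is independent of the choices; (i) is again the tool, since in the range where the relevant mixed multiplicity is nonzero every weak-(FC)-element is forced to be an (FC)-element and hence to drop the dimension by exactly one.

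Item (v) is the main obstacle. The geometric input is that $h=\mbox{ht}((\mathcal I+\mbox{Ann}M)/\mbox{Ann}M)>0$ forces $D=\dim\mbox{Supp}(M)$ --- no minimal prime of $\mbox{Ann}M$ contains $\mathcal I$, so the top-dimensional components of $M$ survive in $M^*$ --- together with the dimension estimate
$$\dim\mbox{Supp}(M/\mathcal I M)=\dim\big(G/(\mathcal I+\mbox{Ann}M)\big)\leq D-h.$$
I would then induct: assuming $x_1,\dots,x_i$ are (FC)-elements with $i\leq h-2$, (ii) gives $\dim\overline{M}_i^*=D-i\geq D-h+2$; as $x_{i+1}$ is a nonzerodivisor on $\overline{M}_i^*$, $\dim(\overline{M}_i^*/x_{i+1}\overline{M}_i^*)=D-i-1\geq D-h+1$, which strictly exceeds $D-h\geq\dim\mbox{Supp}(\overline{M}_i/\mathcal I\overline{M}_i)$. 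Hence no top-dimensional component of $\overline{M}_i^*/x_{i+1}\overline{M}_i^*$ is annihilated by a power of $\mathcal I$, so it survives $\mathcal I$-saturation and $\dim\overline{M}_{i+1}^*=D-i-1$; that is, $x_{i+1}$ satisfies $(FC_3)$. This yields that $x_1,\dots,x_{h-1}$ is an (FC)-sequence. Finally $\dim\overline{M}_{h-1}^*=D-h+1\geq 1$ (as $h\leq D$), so $\overline{M}_{h-1}^*\neq 0$ and the sequence is not yet maximal; therefore $s\geq h$. The delicate point throughout is the passage from the nonzerodivisor drop to the saturated drop, which is exactly what the height hypothesis controls, by keeping $\mbox{Supp}(M/\mathcal I M)$ strictly below the current top dimension.

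For (vi) and (vii), with $t=k_1+\cdots+k_q\leq h-1$, the argument of (v) shows that any weak-(FC)-sequence whose terms are distributed as $k_1$ elements of $I_1,\dots,k_q$ elements of $I_q$ has all of its terms (FC)-elements, and such a sequence exists by iterating Proposition \ref{Obs1}. Theorem \ref{Teo1.2}(i) then gives $e^j(J^{[k_0]},I_1^{[k_1]},\dots,I_q^{[k_q]};M)=e^j_{BR}(J;\overline{M}_t^*)$ with $\dim\overline{M}_t^*=D-t\geq D-h+1\geq 1$; since $J$ has finite colength this Buchsbaum-Rim multiplicity is positive, proving (vi). For (vii) it remains to replace $\overline{M}_t^*$ by $\overline{M}=M/(x_1,\dots,x_t)M$: the kernel of $\overline{M}\to\overline{M}_t^*$, namely $((x_1,\dots,x_t)M:\mathcal I^\infty)/(x_1,\dots,x_t)M$, is $\mathcal I$-torsion and hence supported on $\mbox{Supp}(M/\mathcal I M)$, of dimension $\leq D-h<D-t=\dim\overline{M}_t^*$; being of strictly smaller dimension it does not contribute to the Buchsbaum-Rim multiplicity, so $e^j_{BR}(J;\overline{M})=e^j_{BR}(J;\overline{M}_t^*)$, which is the asserted equality.
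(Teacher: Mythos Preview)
Your treatment of (i), (ii), (iv) and (vii) is essentially the paper's, and your inductive argument for (v) is a valid alternative to the paper's global one (the paper applies the inequality of (ii) once at step $h-1$ and then squeezes with the obvious bound $\dim\mbox{Supp}(M/(x_1,\dots,x_{h-1})M)\geq\dim\mbox{Supp}(M)-(h-1)$, rather than inducting step-by-step). For (vi) you should simply invoke Theorem~\ref{Teo1.2}(ii) once you have produced the (FC)-sequence from (v); the bare assertion that $e^j_{BR}(J;\overline{M}_t^*)$ is positive whenever $\dim\overline{M}_t^*\geq 1$ is not immediate from the definition in Section~2 and is in fact what Theorem~\ref{Teo1.2}(ii) encodes.

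The real gap is in (iii). Your plan is to count steps by the drop in $\dim\overline{M}^*$, using (i) to force each weak-(FC)-element to be an (FC)-element ``in the range where the relevant mixed multiplicity is nonzero''. But once that range is exhausted, weak-(FC)-elements in $I_i$ may still exist (only $\mathcal I\not\subseteq\sqrt{\mbox{Ann}\,\overline M}$ is needed), and for those (ii) gives only $\dim\overline{M}^*$ dropping by \emph{at least} one; different choices could then reach $\overline{M}^*=0$ in different numbers of steps. So the dimension bookkeeping does not pin down the length. The paper avoids this by replacing the geometric invariant $\dim\overline{M}^*$ with a polynomial one: fix $u\gg 0$, set all variables except $r_i$ equal to $u$, and let $B(r_i)$ be the resulting one-variable polynomial. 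The key identity (your reduction formula, specialized) gives $B^*(r_i)=B(r_i)-B(r_i-1)$ for \emph{any} weak-(FC)-element $x\in I_i$, so $\deg B(r_i)$ drops by exactly one at every step regardless of whether $(FC_3)$ holds. An easy induction on $\deg B(r_i)$ then yields $\ell=\deg B(r_i)+1$, which is manifestly independent of the chosen sequence. This is the missing idea; with it, (iii) follows immediately, and it is also what feeds Proposition~\ref{prop5}.
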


\begin{proof}
 We know by \cite[Theorem 4.1]{perez-bedregal1} that the
function $h(n,p,\bold r):=\ell\left(\frac{{\bf I}^{\bf
r}M_{n+p}}{J^{n}{\bf I}^{\bf r}M_{p}}\right)$ is, for all large
$n,p,\bold r,$ a polynomial of degree $D,$ which we denote by
$B(n,p,\bold r).$ Now, by \cite[eq. (4.7)]{perez-bedregal1}, we
have that for all large $n,p,\bold r,$

\begin{equation}\label{equation2}\ell\left(\frac{{\bf I}^{\bf r}\overline{M}^*_{n+p}}
{J^{n}{\bf I}^{\bf r}\overline{M}^*_p}\right)=\ell\left(\frac{{\bf
I}^{\bf r}M^*_{n+p}} {J^{n}{\bf I}^{\bf
r}M^*_p}\right)-\ell\left(\frac{{\bf I}^{{\bf
r}-\delta(i)}M^*_{n+p}} {J^{n}{\bf I}^{{\bf
r}-\delta(i)}M^*_p}\right),\end{equation} where $x$ is a
weak-(FC)-element in $I_i$ with respect to $U$ and
$\overline{M}=M/xM$ and
$\overline{M}^*=\overline{M}/0_{\overline{M}}:{\mathcal
I}^{\infty}=M/xM:{\mathcal I}^{\infty}.$

We first prove $(i).$ Since $e^j(J^{[k_0]},I_1^{[k_1]},\dots,
I_i^{[k_i]},\dots,I_q^{[k_q]};M)\neq 0$ and $k_i>0,\;i\geq 1,$ by
equality (\ref{equation2}) it follows that
$$\ell\left(\frac{{\bf I}^{\bf r}\overline{M}^*_{n+p}}
{J^{n}{\bf I}^{\bf r}\overline{M}^*_p}\right)$$ \noindent is a
polynomial of degree $D-1$  for all large $n,p,\bold r.$ Hence
$$\dim(\mbox{Supp}(M/xM:{\mathcal
I}^{\infty}))=D-1=\dim(\mbox{Supp}(M/0:{\mathcal I}^{\infty}))-1$$
and therefore $x$ is an (FC)-element.

For the proof of $(ii)$ see \cite[Proposition
4.5]{perez-bedregal1}.

We now prove $(iii).$ Notice that, by $(ii),$ the length of any
maximal weak-(FC)-sequence in $I_i$ with respect to $U$ is finite.
Now, by Remark \ref{remark0}, since $\mathcal I$ is not contained
in $\sqrt{\mbox {Ann}M},$ there exists $x_1,\ldots, x_{\ell}$  a
maximal weak-(FC)-sequence with respect to $U$ in $I_i.$ Let
$n,p,\bold r$ be large enough so that the function
$$\ell\left(\frac{{\bf I}^{\bf r}\overline{M}^*_{n+p}}
{J^{n}{\bf I}^{\bf r}\overline{M}^*_p}\right)$$ becomes a
polynomial, which we denote by $B^*(n, p,\bold {r}).$
 Fix an integer
$u\gg 0$ and set $p=n=r_1=\cdots =r_{i-1}=r_{i+1}=\cdots =r_q=u,$
 $B^*(r_i)=B^*(u,u,\ldots,r_i,\ldots,u)$ and
$B(r_i)=B(u,u,\ldots,r_i,\ldots,u).$ Then $B^*(r_i)$ and $B(r_i)$
are polynomials in $r_i.$ By equalities (\ref{equation2}) and
(\ref{equation0}) we have
\begin{equation}\label{equation3} B^*(r_i)=B(r_i)-B(r_i-1)
\end{equation}

 \noindent for all $r_i\geq u.$ Set $t=\deg
B(r_i).$ Since $\mathcal I$ is not contained in
$\sqrt{\mbox{Ann}M},$ $t\geq 0.$ We will prove, by induction on
$t,$ that $\ell=t+1$ and this will end the proof of $(iii).$ For
$t=0,$ we have by equality (\ref{equation3}) that $B^*(r_i)=0.$
From this follows that $\mathcal I$ is contained in $\sqrt
{\mbox{Ann}\overline{M}}.$ By Remark \ref{remark0} this implies
that $\ell=1=t+1.$ Since $\deg B^*(r_i) =t-1$ and $x_2,\ldots,
x_l$ is also a maximal weak-(FC)-sequence of $G$ with respect to
$(J,I_1,\dots,I_q; \overline{M}^*),$ by inductive assumption it
follows that $\ell-1=\deg B^*(r_i)+1=t.$ Thus $\ell=t+1$ and the
induction is complete.

Lets prove $(iv).$ Notice that by Theorem \ref{Teo1.2} $(ii)$ and
Corollary \ref{cor2}, the length of maximal (FC)-sequences with
respect to $U$ in $I_i$ is given by
$$\max\{k_i|e^0(J^{[k_0]},I_1^{[k_1]},\dots, I_q^{[k_q]};M)\neq 0\}.$$
\noindent Thus the length of maximal (FC)-sequences of $G$ with
respect to $U$ in $I_i$ is an invariant.

We prove now $(v).$ We will prove first that $h\leq s.$ Assume for
the contrary that $s<h.$ In this case
$$\mbox{ht}\left (\frac{\mathcal{I}+\mbox{Ann}(M/(x_1,\ldots, x_s)M)}{\mbox
{Ann}(M/(x_1,\ldots, x_s)M)}\right )>0.$$ Thus $\mathcal I$ is not
contained in $\sqrt{\mbox{Ann}(M/(x_1,\ldots, x_s) M)}$ and hence,
by Remark~\ref{remark0}, there is a weak-(FC)-element $x$ such
that $x_1,\ldots, x_s,x$ is a weak-(FC)-sequence  with respect to
$U$ in $I_1\cup \cdots \cup I_q.$ This contradict the maximality
of $x_1,\ldots, x_s$ and hence $h\leq s.$

Now, by $(ii)$ we have that
$$\dim(\mbox{Supp}(M/(x_1,\ldots, x_{h-1})M:\mathcal{I}^{\infty}))\leq \mbox
{dim}(\mbox{Supp}(M/0_{M}:\mathcal{I}^{\infty}))-(h-1).$$ But,
since $\mbox{ht}(\mathcal{I}+\mbox{Ann}M/\mbox{Ann}M)=h,$ we have
that
$$\mbox{ht}\left (\frac{\mathcal{I}+\mbox{Ann}(M/(x_1,\ldots, x_{h-1})M)}
{\mbox{Ann}(M/(x_1,\ldots, x_{h-1})M)}\right )>0.$$
\noindent
Therefore,
$$\dim(\mbox{Supp}\left (M/(x_1,\ldots, x_{h-1})M\right))=\dim(\mbox{Supp}\left
(M/(x_1,\ldots, x_{h-1})M:\mathcal{I}^{\infty}\right)).$$
Furthermore, since
$\mbox{ht}(\mathcal{I}+\mbox{Ann}M/\mbox{Ann}M)=h>0,$ we have that
$$\dim(\mbox{Supp}
M)=\dim(\mbox{Supp}(M/0_{M}:\mathcal{I}^{\infty})).$$ But clearly
$$\dim(\mbox{Supp}\left (M/(x_1,\ldots, x_{h-1})M\right))\geq \dim(\mbox{Supp}M)-
(h-1).$$ Taking into account all the above facts, we get
$$\dim(\mbox{Supp}\left(M/(x_1,\ldots, x_{h-1})M:\mathcal{I}^{\infty}\right))= \dim(\mbox{Supp}\left(M/0_{M}:\mathcal{I}^{\infty}\right))-(h-1).$$
\noindent Hence by $(ii),$ $x_1,\ldots, x_{h-1}$ is an
(FC)-sequence with respect to $U.$

Lets prove $(vi).$ By $(v)$ and the assumption that $k_1+\cdots +
k_q\leq h-1,$ it has been proved that there exists an
(FC)-sequence with respect to $U$ in $I_1\cup \cdots I_q$
consisting of $k_1 $ elements of $I_1,...,k_q$ elements of $I_q.$
Hence by \cite[Theorem 4.7]{perez-bedregal1}
$e^j(J^{[k_0]},I_1^{[k_1]},\dots, I_q^{[k_q]};M)\neq 0.$

The proof of $(vii)$ follows immediately from $(v)$ and
 \cite[Theorem 4.7]{perez-bedregal1}.
\end{proof}

 Set $U=(J,I_1,...,I_q; M)$ and let
$L_U(M)$ denote the set of lengths of maximal weak-(FC)-sequences
in $I_1\cup \cdots \cup I_q$ with respect to $U.$ Based on the
equality (\ref{equation3}), we come to an important
characterization for the lengths of maximal weak-(FC)-sequences.

\begin{prop}\label{prop5}
 In the setup $(1),$ assume that $\cal I$ is not contained in \break $\sqrt{\mbox{Ann}(M)}.$ Set $r=r_i$; $Q(r)=B
(u,u,\ldots,r_i,\ldots,u)$ and $Q({\bf r})=B (u,u,\bf{r})$.
Suppose that $s$ is the length  of maximal weak-(FC)-sequences em
$I_i$ with respect to $U$. Then the following statements hold.
\begin{itemize}
\item [(i)] $s=\deg(Q(r))+1.$ \item [(ii)]
$\max L_U(M)=\deg(Q({\bf r}))+1.$
\end{itemize}
\end{prop}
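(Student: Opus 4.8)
The plan is to read (i) off from the already-established Proposition \ref{prop4}(iii) and to prove (ii) by iterating the finite-difference identity behind (\ref{equation2}) while tracking the total $\mathbf r$-degree. For (i) there is nothing new to do: the proof of Proposition \ref{prop4}(iii) fixes a maximal weak-(FC)-sequence $x_1,\dots,x_\ell$ in $I_i$, forms the one-variable polynomial $B(r_i)=B(u,u,\dots,r_i,\dots,u)$ — which is exactly our $Q(r)$ — and shows by induction on $\deg B(r_i)$ that $\ell=\deg B(r_i)+1$. Since that length is shown there to be an invariant equal to $s$, we obtain $s=\deg Q(r)+1$.

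For (ii), given a weak-(FC)-sequence $x_1,\dots,x_t$ in $I_1\cup\cdots\cup I_q$ write $M_t=M/(x_1,\dots,x_t)M$, $M_t^*=M_t/(0_{M_t}:\mathcal I^\infty)$, and let $Q_t(\mathbf r)$ be the analogue of $Q$ for $M_t$, so $Q_0=Q$. The engine is that if $x_t\in I_{i_t}$ is a weak-(FC)-element with respect to $(J,I_1,\dots,I_q;M_{t-1})$, then applying (\ref{equation2}) to $M_{t-1}$ and setting $n=p=u$ yields the backward difference $Q_t(\mathbf r)=Q_{t-1}(\mathbf r)-Q_{t-1}(\mathbf r-\delta(i_t))$. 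I will use two elementary facts: for a nonzero polynomial $P$ of total degree $d$ one has $\deg\bigl(P(\mathbf r)-P(\mathbf r-\delta(i))\bigr)\le d-1$, with equality precisely when the degree-$d$ form of $P$ involves $r_i$; and, with the convention $\deg 0=-1$, one has $Q_t\equiv 0$ if and only if $M_t^*=0$, i.e. if and only if $x_1,\dots,x_t$ is maximal (here $\Leftarrow$ is clear, and $\Rightarrow$ uses Remark \ref{remark0}, Proposition \ref{Obs1}, and the fact, extracted from the proof of Proposition \ref{prop4}(iii), that $\deg Q_t\ge 0$ once $M_t^*\ne 0$).

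With these in hand both inequalities are short. For $\max L_U(M)\le\deg Q(\mathbf r)+1$: if $x_1,\dots,x_s$ is maximal then the proper segment $x_1,\dots,x_{s-1}$ is not, so $Q_0,\dots,Q_{s-1}$ are all nonzero and the degree drops by at least one at each step, giving $0\le\deg Q_{s-1}\le\deg Q(\mathbf r)-(s-1)$, i.e. $s\le\deg Q(\mathbf r)+1$. For the reverse inequality I build a maximal sequence of length exactly $\deg Q(\mathbf r)+1$ greedily: while $\deg Q_{t-1}\ge 1$, its leading form involves some $r_{i_t}$; choose that index and (by Proposition \ref{Obs1}, legitimate since $Q_{t-1}\ne 0$ forces $M_{t-1}^*\ne 0$) any weak-(FC)-element $x_t\in I_{i_t}$, so that the degree drops by exactly one. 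After $\deg Q(\mathbf r)$ steps $Q_t$ is a nonzero constant, and one further weak-(FC)-element sends the difference to $0$; hence the sequence becomes maximal exactly at length $\deg Q(\mathbf r)+1$. Combining the two bounds gives (ii).

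The step I expect to be the main obstacle is the equivalence $Q_t\equiv 0\iff x_1,\dots,x_t$ maximal, specifically the implication $M_t^*\ne 0\Rightarrow Q_t\not\equiv 0$ (equivalently $\deg Q_t\ge 0$). This is what guarantees both that the greedy construction can be continued as long as $Q_t\ne 0$ and that it must terminate exactly when $Q_t$ becomes $0$; it cannot be obtained by simply evaluating $Q_t$ at $\mathbf r=\mathbf 0$, since that value can vanish when $\dim\operatorname{Supp}M_t^*=0$, and one must instead invoke Proposition \ref{prop4}(iii)/Proposition \ref{Obs1} applied to $M_t$.
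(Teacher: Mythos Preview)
Your proof is correct and follows essentially the same route as the paper: both arguments derive (i) directly from the proof of Proposition~\ref{prop4}(iii), and both prove (ii) by iterating the backward-difference identity $Q_t(\mathbf r)=Q_{t-1}(\mathbf r)-Q_{t-1}(\mathbf r-\delta(i_t))$ coming from (\ref{equation2}), bounding $\max L_U(M)$ from above by the drop-by-at-least-one fact and from below by building a sequence where the degree drops by exactly one. Your treatment is in fact slightly more careful than the paper's: you make explicit that at each greedy step one must choose the index $i_t$ so that the top-degree form of $Q_{t-1}$ actually involves $r_{i_t}$ (the paper simply asserts $\deg Q^*(\mathbf r)=l-1$ without isolating this choice), and you correctly flag the equivalence $Q_t\equiv 0\Leftrightarrow M_t^*=0$ as the point that needs the input of Proposition~\ref{Obs1} and the argument inside Proposition~\ref{prop4}(iii).
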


\begin{proof} First note that $Q(r)$ and $Q({\bold r})$ do not
depend on $u,$ for large $u.$ Notice that $(i)$ follows by the
proof of Proposition \ref{prop4} $(iii).$
 Now assume that
$\deg(Q({\bold r}))=l$. Since ${\mathcal I}$ is not contained in
$\sqrt{\mbox{Ann}M}$, we have by Proposition~\ref{Obs1} that there
exists a weak-(FC)-element  $x\in I_i$ with respect to $U$. Set
$\overline{M}=M/xM$ and $\overline{M}^*=M/(xM:{\cal I}^{\infty}).$

From the proof of Proposition \ref{prop4} we get
\begin{equation}\label{equation8}
\begin{array}{lll} \ell\left(\frac{{\bf I}^{\bf
r}\overline{M}^*_{u+u}} {J^{u}{\bf I}^{\bf
r}\overline{M}^*_u}\right)&=&\ell\left(\frac{{\bf I}^{\bf
r}M^*_{u+u}} {J^{u}{\bf I}^{\bf
r}M^*_u}\right)-\ell\left(\frac{{\bf I}^{{\bf
r}-\delta(i)}M^*_{u+u}}
{J^{u}{\bf I}^{{\bf r}-\delta(i)}M^*_u}\right)\\
&=&B(u,u,{\bold r})-B(u,u,\bold r-\delta(i))\\
&=&Q({\bold r})-Q(\bold r-\delta(i)).
\end{array}
\end{equation}
Set $Q^*({\bold r})=Q({\bold r})-Q({\bold r}-\delta(i)).$ Hence
$\deg(Q^*({\bold r}))=l-1.$

 In order to prove $(ii)$, We
first prove the inequality $$\max L_U(M)\geq \deg(Q({\bold
r}))+1$$ by induction on $l=\deg(Q({\bold r})).$ For $l=0$ the
result trivially holds since $\max L_U(M)\geq 1.$
   Let ${\mathcal U}=(J, I_1,\dots,
I_q; \overline{M}^*)$. By inductive assumption and equation
(\ref{equation8}) it follows that $\max L_{\mathcal U}(J,
I_1,\dots, I_q; \overline{M}^*)\geq l$ and there exists
$x_1,...,x_l$ a weak-(FC)-sequence  with respect to ${\mathcal U}$
in $I_1\cup \cdots \cup I_q.$ From this follows that
$x,x_1,...,x_l$ is a weak-(FC)-sequence in $I_1\cup \cdots \cup
I_q$ with respect to $U$. Hence
$$\max L_U(M)\geq l+1=\deg(Q({\bold r}))+1.$$
\noindent The induction is complete.

To conclude the proof of $(ii),$ we prove now the inequality
$$\max L_U(M)\leq \deg(Q({\bold r}))+1$$ by induction on
$t=\max L_U(M).$ Since ${\mathcal I}$ is not contained in
$\sqrt{\mbox{Ann}M}$, $t\geq 1$. The case $t=1$,  is trivial.

Let $x_1,...,x_t$ be an arbitrary maximal weak-(FC)-sequence in
$I_1\cup \cdots \cup I_q$ with respect to $U$. Without loss of
generality we may assume that $x_1\in I_i$. By equality
(\ref{equation8}) we have $\deg(Q^*({\bold r}))\leq l-1.$  Then
$x_2,...,x_t$ is also a maximal weak-(FC)-sequence  with respect
to ${\mathcal U}$. By inductive assumption, it follows that
$t-1\leq \deg(Q^*({\bold r}))+1\leq l.$ Thus, $t\leq l+1.$ The
induction is complete.
\end{proof}

\begin{lem}\label{lema1}
In the setup $(1),$ set $U_1=(J_1,J_2,I_1,...,I_q; M)$ and
$U_2=(J_1,I_1,...,I_q; M)$ where  $J_1$ and $J_2$ are finitely
generated $R$-submodules of $G_1$ of finite colength. Then for any
$1\leq i\leq q$ we have the following relations.
\begin{itemize}
\item [(i)] If $l_1$ and $l_2$ are lengths  of maximal
weak-(FC)-sequences in $I_j$ with respect to
$U_1$ and $U_2$, respectively, then $l_1=l_2.$
\item
[(ii)] If $l$ and $f$ are lengths  of maximal (FC)-sequences in
$I_j$ with respect to $U_1$ and $U_2$,
respectively, then $l=f.$
\end{itemize}
\end{lem}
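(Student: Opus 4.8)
The plan is to reduce the two-submodule comparison $U_1$ versus $U_2$ to the one-submodule comparison already packaged in Lemma~\ref{lema0}, and then to invoke the invariance statements from Proposition~\ref{prop4}. The key observation is that $J_1,J_2$ have finite colength in $G_1$, so adjoining them to the tuple does not change the relevant saturations: for any module the operations $(0_M:(\mathcal J_1\mathcal J_2\mathcal I)^{\infty})$ and $(0_M:(\mathcal J_1\mathcal I)^{\infty})$ and $(0_M:\mathcal I^{\infty})$ all coincide, exactly as recorded in the proof of Lemma~\ref{lema0}. Consequently the (FC)-condition $(FC_3)$ phrased relative to $U_1$ is \emph{literally the same dimension-drop condition} as the one phrased relative to $U_2$, and $(FC_1)$, $(FC_2)$ for an element $x\in I_j$ are intrinsic to the pair $(\mathcal I;M)$ and do not see $J_1,J_2$ at all.

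For part $(ii)$, I would argue that a sequence $x_1,\dots,x_r$ in $I_j$ is an (FC)-sequence with respect to $U_1$ if and only if it is one with respect to $U_2$. The forward direction is immediate from Lemma~\ref{lema0}: each $\overline x_{i+1}$, being an (FC)-element with respect to $(J_1,J_2,\overline I_1,\dots,\overline I_q;\overline M)$, is by that lemma also an (FC)-element with respect to $(J_1,\overline I_1,\dots,\overline I_q;\overline M)$, which is precisely the $U_2$-condition on the quotient. The reverse direction uses the colength equalities above in the opposite order, noting that passing from $\mathcal I$ to $\mathcal J_2\mathcal I$ leaves every saturation and every dimension unchanged, so $(FC_3)$ with respect to $U_2$ forces $(FC_3)$ with respect to $U_1$; conditions $(FC_1)$ and $(FC_2)$ transfer trivially since they only involve $\mathcal I$ and the chosen $I_j$. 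Therefore the two sets of (FC)-sequences in $I_j$ coincide, and in particular their maximal lengths agree, giving $l=f$.

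For part $(i)$ I would run the same comparison but only using conditions $(FC_1)$ and $(FC_2)$, which define weak-(FC)-elements. Since neither $(FC_1)$ nor $(FC_2)$ references $J_1$ or $J_2$ — both are statements about ${\bf I}^{\bf r}$, $x$, and the saturation $0_M:\mathcal I^{\infty}$ — an element $x\in I_j$ is a weak-(FC)-element with respect to $U_1$ exactly when it is one with respect to $U_2$, and the same equivalence propagates along the quotients $\overline M=M/(x_1,\dots,x_i)M$. Hence the notions of weak-(FC)-sequence in $I_j$ for $U_1$ and for $U_2$ are identical, so by Proposition~\ref{prop4}$(iii)$ their common maximal length is a well-defined invariant and $l_1=l_2$. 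Alternatively, and perhaps more cleanly, $(i)$ follows from the characterization in Proposition~\ref{prop5}$(i)$: the length equals $\deg(Q(r))+1$, where $Q(r)$ is built from the Hilbert polynomial $B(u,u,\dots,r_i,\dots,u)$; since $J_1$ and $J_2$ have finite colength, adding $J_2$ to the defining data does not alter the polynomial $h(n,p,\bf r)$ governing these degrees, so the degrees, and hence the lengths, coincide.

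The main obstacle I anticipate is verifying carefully that the finite-colength hypothesis really does collapse all the saturations uniformly across the quotients $\overline M=M/(x_1,\dots,x_i)M$, not merely for $M$ itself; one must check that the colength of $J_1,J_2$ in $G_1$ continues to control the saturation operation after passing to $\overline G=G/(x_1,\dots,x_i)$, where the images $\overline J_1,\overline J_2$ need not \emph{a priori} retain finite colength in $\overline G_1$. I would handle this by observing that the relevant statement is about the vanishing locus of $\mathcal J_1\mathcal J_2$ versus $\mathcal I$ on $\mathrm{Supp}(\overline M)$, and that finite colength of $J_1,J_2$ in $G_1$ makes $\mathcal J_1,\mathcal J_2$ contain a power of the irrelevant-together-with-$\mathfrak m$ ideal, so they impose no extra conditions beyond those already imposed by $\mathcal I$ on any relevant prime — exactly the mechanism used in Lemma~\ref{lema0}.
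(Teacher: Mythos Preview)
Your argument rests on the assertion that $(FC_1)$ ``does not reference $J_1$ or $J_2$''. This is false. For $x\in I_j$ viewed as a weak-(FC)-element with respect to $U_1=(J_1,J_2,I_1,\dots,I_q;M)$, condition $(FC_1)$ reads
\[
J_1^{s_1}J_2^{s_2}{\bf I}^{\bf r}M_p\;\cap\; xM_{s_1+s_2+|{\bf r}|+p-1}\;=\;x\,J_1^{s_1}J_2^{s_2}{\bf I}^{{\bf r}-\delta(j)}M_p
\]
for all $(s_1,s_2,{\bf r})\in\N^{q+2}$ with $r_j\ge r'_j$, so the $J$'s genuinely appear. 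Specializing $s_2=0$ gives the $U_2$-condition, hence $U_1\Rightarrow U_2$ is trivial (this is all Lemma~\ref{lema0} needs for $(FC_1)$). The converse, however, does not follow from your colength argument: from the $U_2$-condition one only obtains containment in $x\,J_1^{s_1}{\bf I}^{{\bf r}-\delta(j)}M_{p+s_2}$, not in the smaller module $x\,J_1^{s_1}J_2^{s_2}{\bf I}^{{\bf r}-\delta(j)}M_p$. So your ``exactly when'' claims in both (i) and (ii) are unproved, and the obstacle you flag at the end (about saturations on quotients) is not the real issue.

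The paper avoids this asymmetry by never claiming the reverse implication for $(FC_1)$. For (i) it takes a maximal weak-(FC)-sequence $x_1,\dots,x_{l_1}$ for $U_1$, uses only $U_1\Rightarrow U_2$ to see it is a weak-(FC)-sequence for $U_2$, checks maximality via Remark~\ref{remark0} (the criterion ${\mathcal I}\subseteq\sqrt{\Ann(M/(x_1,\dots,x_{l_1})M)}$ involves only $\mathcal I$, by the saturation equalities), and then invokes Proposition~\ref{prop4}(iii). For (ii), the forward direction gives $l\le f$; for $f\le l$ the paper selects, inside this common maximal weak-(FC)-sequence, an initial segment that is a maximal (FC)-sequence for $U_2$, and then applies Proposition~\ref{prop4}(ii): among weak-(FC)-sequences the (FC)-property is detected purely by the dimension drop of $\mbox{Supp}(M/(x_1,\dots,x_t)M:{\mathcal I}^\infty)$, and \emph{that} quantity is indeed independent of $J_1,J_2$. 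Your plan becomes correct once you replace the two-sided equivalence by this one-directional argument together with Proposition~\ref{prop4}(ii).
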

\begin{proof}
The proof of $(i):$ Since $x_1,...,x_{l_1}$ is a maximal
weak-(FC)-sequence in $I_i$ with respect to $U_1$, then by Remark
\ref{remark0}, ${\mathcal I}\subseteq
\sqrt{\mbox{Ann}(M/(x_1,...,x_{l_1})M)}$. By definition of a
weak-(FC)-sequence and Lemma \ref{lema0}, $x_1,...,x_{l_1}$ is
also a maximal weak-(FC)-sequence in $I_i$ with respect to $U_2.$
By Proposition~\ref{prop4} $(iii)$ we get $l_1=l_2.$

The proof of $(ii)$: Note that, by Lemma \ref{lema0}, any
(FC)-sequence in $I_i$ with respect to $U_1$ is also an
(FC)-sequence in $I_i$ with respect to $U_2$. Hence $l\leq f.$
Assume that $x_1,...,x_{l_1}$ is  a maximal weak-(FC)-sequence in
$I_i$ with respect to $U_1.$ By $(i),$ $x_1,...,x_{l_1}$ is also a
maximal weak-(FC)-sequence in $I_i$ with respect to $U_2.$ By
Proposition \ref{prop4} $(i),$ there exists $f<l_1$ elements, say
$x_1,...,x_f$, amongst the set $x_1,...,x_{l_1}$ which form  a
maximal (FC)-sequence in $I_i$ with respect to $U_2.$ By
Proposition \ref{prop4} $(i),$ we have
$$\dim(\mbox{Supp}(M/((x_1,...,x_f):{\mathcal
I}^{\infty})))=\dim(\mbox{Supp}(M/(0:{\mathcal I}^{\infty})))-f.$$
By Proposition \ref{prop4} $(ii)$ and Lemma \ref{lema0},
$x_1,...,x_f$ is also an (FC)-sequence in $I_i$ with respect to
$U_1.$ Thus $f\leq l$ and we get the result.
\end{proof}

Let $\mu({J})$ denote the minimal number of generators of an
$R$-submodule ${ J}$ of $G_1$.

\begin{defn} Let $I_1,...,I_q$ be $R$-submodules of $G_1.$ For
$i=1,...,q$, an $R$-submodule ${ J}_i$ of $I_i$ is called a
reduction of $I_i$ with respect to $U'=(I_1,...,I_q;M)$ if
$${\bf I}^{{\bf r}}M_p={J}_i{\bf I}^{{\bf r}-\delta(i)}M_p,  \mbox{ for all large } {\bf
r}, {\mbox { and all }} p\geq 0.$$ Set $N_{U'}(I_i)=\min\{\mu({
J}_i)|{ J}_i \mbox{ is a reduction of } I_i\mbox{  with respect to
}U'\}.$ \break $N_{(I;M)}(I)$ will be denoted by $N(I).$
\end{defn}

Let $I$ be an $R$-submodule of $G_1.$  Let ${\mathcal
R}[I]:=\oplus_{n\in \N}{ I}^n$ be the  graded $R$-subalgebra of
$G$ generated in degree one by $I$. We call this algebra the {\it
Rees algebra} of ${I}.$ The number $s({I}):=\dim
\left(\frac{{\mathcal R}[I]}{{\mathfrak m}{\mathcal
R}[I]}\right)$, is called the {\it analytic spread} of $I.$ More
generally, if $I_1,...,I_q$ are $R$-submodules of $G_1$ and
$I=I_1\cdots I_q$ we define the Rees algebra of $I,$ also denoted
by ${\mathcal R}[I]$, as the $R$-subalgebra of $G^{(q)}$ given by
$${\mathcal R}[I]=\oplus_{n\in \N}{ I}^n=\oplus_{n\in \N}
I_1^n\cdots I_q^n,$$ where $G^{(q)}$ is the standard graded
$R$-algebra given by $G^{(q)}=\oplus_{n\in \N}G_{qn}.$
Analogously, we call the number $s({I}):=\dim
\left(\frac{{\mathcal R}[I]}{{\mathfrak m}{\mathcal
R}[I]}\right)$, the {\it analytic spread} of $I.$ Notice that,
since $\frac{{\mathcal R}[I]}{{\mathfrak m}{\mathcal R}[I]}$ is
generated over the field $R/\mathfrak m$ by $I/{\mathfrak m}I,$ we
have that $s(I)\leq \mu(I),$ where $\mu(I)$ is the minimal number
of generators of $I.$

Now we will describe the length of maximal weak-(FC)-sequences and
also the relation between maximal weak-(FC)-sequences and
reductions.

\begin{thm}\label{teo1}
Let $(J_1,\dots, J_t)$ be finitely generated $R$-submodules of
$G_1$ of finite colength. Set $U=(J_1,...,J_t,I_1,...,I_q;M)$ and
$U'=(I_1,...,I_q;M).$
 For any
$1\leq i\leq q$ set
$$\hat{\bf I}_{i}=I_1\cdots I_{i-1}I_{i+1}\cdots I_q$$ if $q>1$
and $\hat{\bf I}_{i}=G_1$ if $q=1;$ set $R_i={\cal R}[I_i].$ Then
\begin{itemize}
\item [(i)] For any $1\leq i\leq q$, the length  of maximal
weak-(FC)-sequences in $I_i$ with respect to $U$ is an invariant
and this invariant does not depend on $t$ and $J_1,\dots, J_t.$

\item [(ii)] If $l$ is the length  of
maximal (FC)-sequences in $I_i$ with respect to $U,$ then
$$l=\dim\left(\mbox{Proj}\left(\frac{R_i}{\left({\mathfrak
m}^{u}{\hat{\bf I}}_i^uM_{u+u}{R_i}:_{R_i}{\hat{\bf
I}}_i^uM_{u+u}R_i\right)}\right)\right)+1\leq s(I_i),$$ for all
large $u.$

\item [(iii)] If $x_1,...,x_l$ is  a
maximal weak-(FC)-sequence in $I_i$ with respect to $U,$ then
${\mathfrak J}=(x_1,...,x_l)$ is a reduction of $I_i$ with respect
to $U'$ and $l=N_{U'}(I_i).$

\item [(iv)] $\max L_U(M)=\dim\left
(\mbox{Proj}\left(\frac{R[I]}{({
J}^{u}M_{u}R[I]:M_{tu+u}R[I])}\right)\right )+1\leq s(I),$ where
$I=I_1\cdots I_q$ and $J=J_1\cdots J_t.$

\item [(v)] $\mbox{ht}({\cal I}+\mbox{Ann}(M)/\mbox{Ann}(M))\leq \max L_{U}(M),$ where $\cal I$ is the ideal in $G$ generated by
$I_1\cdots I_q.$
\end{itemize}
\end{thm}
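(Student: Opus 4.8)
The plan is to prove the five items by exploiting the machinery already assembled, chiefly the invariance results of Proposition \ref{prop4} and the degree/length characterization of Proposition \ref{prop5}, together with the reduction-to-the-finite-colength-case provided by Lemma \ref{lema0} and Lemma \ref{lema1}. First, item (i) follows essentially immediately: Proposition \ref{prop4}(iii) already asserts that for the fixed family $U$ the length of maximal weak-(FC)-sequences in $I_i$ is an invariant, and Lemma \ref{lema1}(i) says this invariant is unchanged when one adjoins or deletes finite-colength modules $J_s$ from the family. Peeling off the $J_1,\dots,J_t$ one at a time via repeated application of Lemma \ref{lema1}(i) shows the common length equals the corresponding length with respect to $U'=(I_1,\dots,I_q;M)$, proving independence of $t$ and of $J_1,\dots,J_t$.

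For item (ii), I would start from Proposition \ref{prop5}(i), which identifies $l$ (using Proposition \ref{prop4}(iv) to pass from weak-(FC) to (FC), or rather identifying the relevant polynomial degree) with $\deg Q(r)+1$ where $Q(r)=B(u,u,\dots,r_i,\dots,u)$. The substantive step is to re-express this Hilbert-polynomial degree as the dimension of a $\mathrm{Proj}$. The colon-ideal construction
$$
\frac{R_i}{\bigl(\mathfrak m^{u}\hat{\bf I}_i^uM_{u+u}R_i :_{R_i}\hat{\bf I}_i^uM_{u+u}R_i\bigr)}
$$
is designed so that its homogeneous components in the grading by $I_i$-powers reproduce, for large $u$, the lengths computed by $Q(r)$; thus its associated Hilbert polynomial in the $I_i$-degree has degree $\deg Q(r)=l-1$, whence $\dim \mathrm{Proj}$ of this ring is $l-1$ and the claimed formula $l=\dim\mathrm{Proj}(\cdots)+1$ follows. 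The final inequality $l\le s(I_i)$ comes from the standard fact that the analytic spread bounds the dimension of the fiber cone: the ring in question is a quotient of $R_i/\mathfrak m R_i$, so its $\mathrm{Proj}$ has dimension at most $s(I_i)-1$.

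For item (iii), I would argue that a maximal weak-(FC)-sequence $x_1,\dots,x_l\in I_i$ generates a reduction. By Remark \ref{remark0}, maximality gives $\mathcal I\subseteq\sqrt{\mathrm{Ann}(M/(x_1,\dots,x_l)M)}$, and unwinding condition $(FC_1)$ inductively along the sequence forces the reduction equality ${\bf I}^{\bf r}M_p=\mathfrak J\,{\bf I}^{{\bf r}-\delta(i)}M_p$ for all large ${\bf r}$ and all $p$; hence $\mathfrak J=(x_1,\dots,x_l)$ is a reduction with respect to $U'$, giving $N_{U'}(I_i)\le l$. The reverse inequality $l\le N_{U'}(I_i)$ requires showing that no reduction can be generated by fewer than $l$ elements, which I expect to follow from (ii): any reduction $J_i$ forces the relevant Hilbert polynomial to have degree $<\mu(J_i)$, so $\mu(J_i)\ge l$. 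Item (iv) is the multi-index analogue of (ii)--(iii): here I invoke Proposition \ref{prop5}(ii), which gives $\max L_U(M)=\deg Q({\bf r})+1$, and repeat the $\mathrm{Proj}$-translation now for the full product $I=I_1\cdots I_q$ inside $R[I]$, with the bound $\le s(I)$ coming again from the fiber-cone dimension. Finally, item (v) is a height estimate: since a maximal weak-(FC)-sequence has length $\max L_U(M)$ and, by Proposition \ref{prop4}(v), any maximal weak-(FC)-sequence of length $s$ satisfies $\mathrm{ht}(\mathcal I+\mathrm{Ann}M/\mathrm{Ann}M)=h\le s$, we obtain $h\le\max L_U(M)$ directly.

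I expect the main obstacle to be item (ii)'s core identification — carefully verifying that the colon-module construction $\mathfrak m^{u}\hat{\bf I}_i^uM_{u+u}R_i :_{R_i}\hat{\bf I}_i^uM_{u+u}R_i$ has graded pieces whose lengths match $Q(r)$ for all large $u$, so that its $\mathrm{Proj}$ dimension equals $\deg Q(r)$. This is where the bookkeeping of the triple grading $(n,p,{\bf r})$ collapses to the single $I_i$-grading, and where one must check that setting $n=p=u$ and the other $r_j=u$ does not lose degree; the stabilization-in-$u$ claim asserted at the start of the proof of Proposition \ref{prop5} will be the key input here.
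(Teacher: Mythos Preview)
Your plan is correct and mirrors the paper's proof in both structure and in the lemmas invoked. Two details in (iii) deserve attention: the inductive ``unwinding of $(FC_1)$'' requires the Artin--Rees lemma, since $(FC_1)$ for $x_t$ is stated modulo $(x_1,\dots,x_{t-1})M$ (via the saturation $N=(x_1,\dots,x_{t-1})M:\mathcal I^\infty$) and one must descend to the honest intersection $(x_1,\dots,x_t)M_{|\mathbf r|+p-1}\cap\mathbf I^{\mathbf r}M_p$; and for the reverse inequality $l\le N_{U'}(I_i)$ the paper applies (ii) not to $I_i$ but to a putative smaller reduction $\mathfrak J_i$ inside the enlarged family $(J_1,\dots,J_t,I_1,\dots,I_i,\mathfrak J_i,I_{i+1},\dots,I_q;M)$, obtaining $l\le s(\mathfrak J_i)\le\mu(\mathfrak J_i)$ after matching Hilbert polynomials via the reduction property.
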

\begin{proof}
The proof of $(i)$: By Proposition \ref{prop4} $(iii),$ it follows
that for any $1\leq i\leq q$, the length of maximal
weak-(FC)-sequences in $I_i$ with respect to $U$ is an invariant
of $I_i.$ Let $J_1'$ and $J_1''$ be finitely generated
$R$-submodules of $G_1$ of finite colength. We call $l_1,l_2,l_3$
the lengths of maximal weak-(FC)-sequences in $I_i$ with respect
to $(J_1', J_1'',J_2,\dots, J_t,I_1,\dots, I_q;M),\;\;
(J'_1,\dots, J_t,I_1,\dots, I_q;M)$ and  $(J''_1,\dots,
J_t,I_1,\dots, I_q;M),$ respectively. From Lemma \ref{lema1} $(i)$
we have $l_2=l_1=l_3.$ Thus, $l_2$ does not depend on $t$ and
$J_1,\dots,J_t$.

The proof of $(ii):$ Set $J=J_1\cdots J_t$. Assume that
$$B(n,p,\bold r):=\ell\left(\frac{{\bf I}^{\bf r}M_{tn+p}}{J^{n}{\bf I}^{\bf r}M_{p}}\right).$$
\noindent By item $(i),$ $l$ is independent of $t$ and $J.$ Then
suppose that $t=1$ and $J={\mathfrak m}G_1.$ In this case, since
$M$ is generated in degree zero, we have
$$B(n,p,\bold r)=\ell\left(\frac{{\bf I}^{\bf r}M_{n+p}}{{\mathfrak m}^{n}{\bf I}^{\bf r}M_{n+p}}\right).$$
\noindent
 For all $n,p, {\bold r}\gg 0$. Fix an integer $u\gg 0$, and
set $p=n=r_1=\cdots=r_{i-1}=r_{i+1}=\cdots =r_q=u$; $r_i=r$ and
$Q(r)=B(u,u...,r,...,u).$ Then $Q(r)$ is a polynomial in $r$. By
Proposition \ref{prop5} $(i)$ we have $l=\deg(Q(r))+1$. Set
$$K_u=\bigoplus_{r\in \N}\frac{{\hat{\bf I}}_i^uI_i^rM_{u+u}}{{\mathfrak
m}^u{\hat{\bf I}}_i^uI_i^rM_{u+u}}=:\bigoplus_{r\in \N}[K_u]_r.$$
\noindent It is easily seen that $K_u$ is a finitely generated
graded $R_i$-module. Furthermore,
$$\ell([K_u]_r)=\ell\left(\frac{{\hat{\bf I}}_i^uI_i^rM_{u+u}}{{\mathfrak
m}^u{\hat{\bf I}}_i^uI_i^rM_{u+u}}\right)=Q(r)$$ is a polynomial
in $r$ having degree
$$\dim\left(\mbox{Supp}(K_u)\right)=
\dim\left(\mbox{Proj}\left(\frac{R_i}{\mbox{Ann}_{R_i}(K_u)}\right)\right).$$
Therefore
$l=\dim\left(\mbox{Proj}\left(\frac{R_i}{\mbox{Ann}_{R_i}(K_u)}\right)\right)+1$
for all large $u.$ Observe that
$$
\begin{array}{lll}
\dim\left(\mbox{Proj}\left(\frac{R_i}{\mbox{Ann}_{R_i}(K_u)}\right)\right)
&=&\dim\left(\mbox{Proj}\left(\frac{R_i}{\left({\mathfrak
m}^{u}{\hat{\bf I}}_i^uM_{u+u}{R_i}:_{R_i}{\hat{\bf I}}_i^uM_{u+u}R_i\right)}\right)\right)\\
&\leq & \dim\left(\mbox{Proj}\left(\frac{R_i}{{\mathfrak m}^uR_i}\right)\right)\\
&=&s(I_i)-1.
\end{array}
$$
\noindent for all large $u$. Thus
$$l\leq s(I_i).$$
The proof  of $(iii)$: Let $x_1,...,x_{l}$ be  a maximal
weak-(FC)-sequence in $I_i$ with respect to $U.$ By Remark
\ref{remark0} we have that
$$\mbox{ht}\left(\frac{{\mathcal
I}+\mbox{Ann}(M/(x_1,...,x_{l})M)}{\mbox{Ann}(M/(x_1,...,x_{l})M)}\right)=0.$$
\noindent Hence $\sqrt{\mathcal{I}}=\sqrt{\mbox{Ann}\left
(\frac{M}{(x_1,...,x_{l})M}\right )}.$

We will prove next by induction on $t$ that
$$(x_1,...,x_t)M_{p+|{\bf r}|-1}\cap {\bf I}^{{\bf r}}M_p=
(x_1,...,x_t){\bf I}^{{\bf r}-\delta(i)}M_p$$ \noindent for all
large ${\bold r}$, all $p$ and all $t\leq l$. If $t=0$ the result
trivially holds.  Set $N=(x_1,...,x_{t-1})M:{{\mathcal
I}^{\infty}}\subseteq M$. Since $x_t$ satisfies the condition
$(FC)_1$ with respect to $(I_1,\dots, I_q;M/(x_1,...,x_{t-1})M)$,
$$(N_{|\bold r|+p}+x_tM_{p+|\bold r|-1})\cap ({\bf I}^{{\bf
r}}M_p+N_{|\bold r|+p}) = x_t{\bf I}^{{\bf
r}-\delta(i)}M_p+N_{|\bold r|+p}$$ \noindent for all large $r_i$
and all non-negative integers $p,r_1,...,r_{i-1},r_{i+1},...,r_q$
and all $t\leq l.$ From this we get that \small{$$
\begin{array}{lll}
\vspace{0.3cm} (N_{|\bold r|+p}+x_tM_{p+|\bold r|-1})\cap {\bf
I}^{{\bf r}}M_p
&=&{\bf I}^{{\bf r}}M_p\cap ({\bf I}^{{\bf r}}M_p+N_{|\bold r|+p})\cap (N_{|\bold r|+p}+x_tM_{p+|\bold r|-1})\\
\vspace{0.3cm} &=&{\bf I}^{{\bf r}}M_p\cap (x_t{\bf I}^{{\bf
r}-\delta(i)}M_p+N_{|\bold r|+p})
\end{array}
$$}
\noindent for all large ${\bold r}$ and all $p$. Therefore,
\begin{equation}\label{equation5}
(N_{|\bold r|+p}+x_tM_{p+|\bold r|-1})\cap {\bf I}^{{\bf r}}M_p
=x_t{\bf I}^{{\bf r}-\delta(i)}M_p+N_{|\bold r|+p}\cap {\bf
I}^{{\bf r}}M_p
\end{equation}
\noindent for all large ${\bold r}$ and all $p$. By the Artin-Rees
Lemma \cite[Lemma 3.5]{Bedregal-Perez}, there exists $\bf c$ such
that
$$N_{|\bold r|+p}\cap {\bf I}^{{\bf r}}M_p={\bf I}^{{\bf r}-{\bf c}}({\bf I}^{{\bf c}}M_p\cap
N_{|{\bf c}|+p})$$ \noindent for all large ${\bold r}\geq {\bf
c}$.

Hence, since by definition of $N$ we have that ${\bf I}^{{\bf
r}}N_p\subseteq (x_1,...,x_{t-1})M_{|\bold r|+p-1}$ for large
${\bold r}$ and all $p,$ we get
$$\begin{array}{lll}
N_{|\bold r|+p}\cap {\bf I}^{{\bf r}}M_p&=& {\bf I}^{{\bf r}-{\bf
c}}({\bf I}^{{\bf c}}M_p\cap
N_{|{\bf c}|+p})\\
&\subseteq &{\bf I}^{{\bf r}}M_p\cap {\bf I}^{{\bf r}-{\bf
c}}N_{|{\bf c}|+p}\\
& \subseteq & {\bf I}^{{\bf r}}M_p\cap (x_1,...,x_{t-1})M_{|\bold
r|+p-1}
\end{array}
$$
\noindent for all large ${\bold r}$ and all $p$. Therefore,
\begin{equation}\label{equation6}
N_{|\bold r|+p}\cap {\bf I}^{{\bf r}}M_p={\bf I}^{{\bf r}}M_p\cap
(x_1,...,x_{t-1})M_{|\bold r|+p-1}
\end{equation}
\noindent for all large ${\bold r}$ and all $p$. By inductive
assumption we see that $${\bf I}^{{\bf r}}M_p\cap
(x_1,...,x_{t-1})M_{|\bold r|+p-1}= (x_1,...,x_{t-1}){\bf I}^{{\bf
r}-\delta(i)}M_p$$ \noindent for all large ${\bold r}$ and all
$p$. Thus, by equality (\ref{equation6}), we have $$N_{|\bold
r|+p}\cap {\bf I}^{{\bf r}}M_p=(x_1,...,x_{t-1}){\bf I}^{{\bf
r}-\delta(i)}M_p$$ \noindent for all large ${\bold r}$ and all
$p$. Hence combining this fact with equality (\ref{equation5}) we
get \small{$$\begin{array}{lll} \vspace{0.3cm}(N_{|\bold
r|+p}+x_tM_{|\bold r|+p-1})\cap {\bf I}^{{\bf r}}M_p
&=&(x_1,...,x_{t-1}){\bf I}^{{\bf r}-\delta(i)}M_p+x_t{\bf I}^{{\bf r}-\delta(i)}M_p\\
\vspace{0.3cm} &=&(x_1,...,x_t){\bf I}^{{\bf r}-\delta(i)}M_p
\end{array}
$$}
\noindent for all large ${\bold r}$ and all $p$.  It follows
directly from this that
\begin{equation}\label{equation7}
 (x_1,...,x_t)M_{|{\bf r}|+p-1}\cap {\bf
I}^{{\bf r}}M_p
 =(x_1,...,x_t){\bf I}^{{\bf r}-\delta(i)}M_p
\end{equation}
\noindent for all large ${\bold r}$ and all $p$. The induction is
complete.

Since $\sqrt{\mathcal{I}}=\sqrt{\mbox{Ann}\left
(\frac{M}{(x_1,...,x_{l})M}\right )}.$
$${\bf I}^{{\bf
r}}M_p\subseteq (x_1,...,x_l)M_{|{\bf r}|+p-1}$$ \noindent for all
large ${\bold r}$ and all $p$. Hence
$$
\begin{array}{lll}
\vspace{0.3cm} {\bf I}^{{\bf r}}M_p&=&(x_1,...,x_l)M_{|{\bf
r}|+p-1}\cap {\bf I}^{{\bf r}}M_p\\
\vspace{0.3cm} &=&(x_1,...,x_l){\bf I}^{{\bf r}-\delta(i)}M_p
\end{array}
$$ \noindent for all large ${\bold r}$ and all $p$. Therefore $(x_1,...,x_l)$
is a reduction of $I_i$ with respect to $(I_1,..,I_q;M).$

Now, we prove that $l=N_{U'}(I_i)$. So far we have proved that
$N_{U'}(I_i)\leq l.$ Let us assume that $N_{U'}(I_i)<l,$ that is,
there exists ${\mathfrak J}_i=(x_1,...,x_t)\;\;\;(t<l)$ which is a
reduction of $I_i$ with respect to $(I_1,..,I_q;M)$. Let
$y_1,...,y_k$ be  a maximal weak-(FC)-sequence in ${\mathfrak
J}_i$ with respect to $(J_1,...,J_t,I_1,\dots,I_i,{\mathfrak
J}_i,I_{i+1},...,I_q;M)$

 Set $R_i^*=R[{\mathfrak J}_i]$. By $(ii)$ we
have
$$k=\dim\left(\mbox{Proj}\left(\frac{R_i^*}{\left({\mathfrak
m}^{u}{\hat{\bf I}}_i^uM_{u+u}{R_i^*}:_{R_i^*}{\hat{\bf
I}}_i^uM_{u+u}R_i^*\right)}\right)\right)+1\leq s({\mathfrak
J}_i),$$ for all large $u.$ But, since $s({\mathfrak J}_i)\leq
\mu({\mathfrak J}_i),$ we have that $k\leq t$.

 Set $J=J_1\cdots J_t$. Assume that
$$B(n,p,\bold r):=\ell\left(\frac{{\bf I}^{\bf r}M_{tn+p}}{J^{n}{\bf I}^{\bf r}M_{p}}\right).$$
\noindent for all $n,p,\bold r \gg 0.$ By item $(i)$ $l$ is
independent of $t$ and $J.$ Then we may suppose that $t=1$ and
$J={\mathfrak m}G_1.$ In this case, since $M$ is generated in
degree zero, we have
$$B(n,p,\bold r)=\ell\left(\frac{{\bf I}^{\bf r}M_{n+p}}{{\mathfrak m}^{n}{\bf I}^{\bf r}M_{n+p}}\right).$$
\noindent
 For all $n,p, {\bold r}\gg 0$. Fix an integer $u\gg 0$, and
set $p=n=r_1=\cdots=r_{i-1}=r_{i+1}=\cdots =r_q=u$; $r_i=r$ and
$Q(r)=B(u,u...,r,...,u).$ Then $Q(r)$ is a polynomial in $r$.

 By
Proposition \ref{prop5} $(i)$ we have $l=\deg(Q(r))+1$. Set
$$K_u=\bigoplus_{r\in \N}\frac{{\hat{\bf I}}_i^uI_i^rM_{u+u}}{{\mathfrak
m}^u{\hat{\bf I}}_i^uI_i^rM_{u+u}}=:\bigoplus_{r\in \N}[K_u]_r.$$
\noindent It is clear that $K_u$ is a finitely generated graded
$R_i$-module. Furthermore, for large $u,$
$$\ell([K_u]_r)=\ell\left(\frac{{\hat{\bf I}}_i^uI_i^rM_{u+u}}{{\mathfrak
m}^u{\hat{\bf I}}_i^uI_i^rM_{u+u}}\right)=Q(r).$$ Set
$$N^*_u=\bigoplus_{r\in \N}\frac{{\hat{\bf I}}_i^uI_i^z{\mathfrak J}_i^rM_{u+u}}{{\mathfrak
m}^u{\hat{\bf I}}_i^uI_i^z{\mathfrak
J}_i^rM_{u+u}}=:\bigoplus_{r\in \N}[N^*_u]_r.$$ \noindent It is
clear that $N^*_u$ is a finitely generated graded $R^*_i$-module.

 Since ${\mathfrak J}_i$ is a reduction of
$I_i$ with respect to $(I_1,...,I_q;M)$, there exists $c$ such
that
$$[N^*_u]_r=\frac{{\hat{\bf I}}_i^uI_i^z{\mathfrak J}_i^rM_{u+u}}{{\mathfrak
m}^u{\hat{\bf I}}_i^uI_i^z{\mathfrak
J}_i^rM_{u+u}}=\frac{{\hat{\bf I}}_i^uI_i^{r+z}M_{u+u}}{{\mathfrak
m}^u{\hat{\bf I}}_i^uI_i^{r+z}M_{u+u}}=[K_u]_{r+z}$$ \noindent for
all $z\geq c$. Thus, we get
$Q(r+z)=\ell([K_u]_{r+z})=\ell([N^*_u]_r).$

Hence
$$
\begin{array}{lll}
\vspace{0.3cm}
l=\deg(Q(r))+1&=&\deg(Q(r+z))+1=\dim(\mbox{Supp}(K_u^*))+1\\
\vspace{0.3cm}
&=&\dim\left(\mbox{Proj}\left(\frac{R^*_i}{\left({\mathfrak
m}^u{\hat{\bf I}}_i^uI_i^zM_{u+u}R_i^*:{\hat{\bf
I}}_i^iI_i^zM_{u+u}R_i^*\right)}\right)\right)+1
\end{array}
$$
It is completely clear that $$\left({\mathfrak m}^u{\hat{\bf
I}}_i^uM_{u+u}R_i^*:{\hat{\bf
I}}_i^uM_{u+u}R_i^*\right)\subseteq\left({\mathfrak m}^u{\hat{\bf
I}}_i^uI_i^zM_{u+u}R_i^*:{\hat{\bf
I}}_i^uI_i^zM_{u+u}R_i^*\right).$$ From the above facts and $(i)$
we get
$$
\begin{array}{lll}
\vspace{0.3cm}
l&=&\dim\left(\mbox{Proj}\left(\frac{R^*_i}{\left({\mathfrak
m}^u{\hat{\bf I}}_i^uI_i^zM_{u+u}R_i^*:{\hat{\bf
I}}_i^iI_i^zM_{u+u}R_i^*\right)}\right)\right)+1\\
\vspace{0.3cm}
&\leq&\dim\left(\mbox{Proj}\left(\frac{R^*_i}{\left({\mathfrak
m}^u{\hat{\bf I}}_i^uM_{u+u}R_i^*:{\hat{\bf
I}}_i^iM_{u+u}R_i^*\right)}\right)\right)+1\\
\vspace{0.3cm}  &=&k.
\end{array}$$
\noindent Thus, $t\geq k\geq l$. This contradict the assumption
that $t<l,$ and hence completes the proof of $(iii).$

 The proof of $(iv):$ Set
$J=J_1\cdots J_t$. Since $J$ has finite colength in $G_t$, there
exists an integer $k$ such that ${\frak m}^kG_t\subseteq J.$

Assume that
$$B(n,p,\bold r):=\ell\left(\frac{{\bf I}^{\bf r}M_{tn+p}}{J^{n}{\bf I}^{\bf r}M_{p}}\right).$$ \noindent is a polynomial for all
$p,n, {\bold r}\geq v$. Fix an integer $u\geq v$, by $(i)$ and
Proposition \ref{prop5} $(ii)$ it follows that $\max
L_U(M)=\deg(B(u,u,\bold r))+1$. Set $r_1=...=r_q=m.$ We get a
polynomial
$$B(u,u,m,...,m)=\ell\left(\frac{{\bf I}^{m}M_{tu+u}}{J^{u}{\bf I}^mM_{u}}\right).$$
\noindent It is  clear that $$\deg(B(n,p,{\bold
r}))=\deg(B(u,u,m,...,m))=\dim\left
(\mbox{Proj}\left(\frac{R[I]}{({
J}^{u}M_{u}R[I]:M_{tu+u}R[I])}\right)\right ).$$ \noindent From
this follows that $$\max L_U(M)=\dim\left
(\mbox{Proj}\left(\frac{R[I]}{({
J}^{u}M_{u}R[I]:M_{tu+u}R[I])}\right)\right )+1$$ for all large
$u$.

Notice that
$$\begin{array}{lll}
\vspace{0.3cm} \dim\left (\mbox{Proj}\left(\frac{R[I]}{({
J}^{u}M_{u}R[I]:M_{tu+u}R[I])}\right)\right )&\leq &\dim\left
(\mbox{Proj}\left(\frac{R[I]}{({\frak
m}^{ku}M_{tu+u}R[I]:M_{tu+u}R[I])}\right)\right )\\
\vspace{0.3cm} &\leq & \dim\left
(\mbox{Proj}\left(\frac{R[I]}{({\frak m}^{ku}R[I]}\right)\right
)\\
\vspace{0.3cm} &=& s(I)-1. \end{array}
$$
Thus, we get $(iv).$

The proof of $(v)$: From Proposition \ref{prop4} $(v)$ we get
$(v).$
\end{proof}

\begin{rem} {\rm Let $I$ be a finitely generated
$R$-submodule of $G_1$, assume that the ideal ${\mathcal I}$ of
$G$ generated by $I$  is not contained in $\sqrt{\mbox{Ann}(G)}$
and let $J$ be a finitely generated $R$-submodule of $G_1$ of
finite colength. Let $x_1,...,x_l$ be a maximal weak-(FC)-sequence
in $I$ with respect to $(J,I;G)$. Set ${\mathfrak
J}=(x_1,...,x_l).$ By Theorem \ref{teo1}, items $(iii)$ and
$(iv)$, ${\mathfrak J}$ is a reduction of $I$ (for ideals see
\cite{Viet9}) and
$$l=N({I})=\mu({\mathfrak
J})=s({I}).$$}
\end{rem}

Thus, as an immediate consequence of Theorem \ref{teo1}, we get
the following interesting result.

\begin{thm} \label{teo2} Let $I$ be a finitely generated
$R$-submodule of $G_1$, assume that ${\mathcal I}$ is not
contained in $\sqrt{\mbox{Ann}(G)}$ and let $J$ be a finitely
generated $R$-submodule of $G_1$ of finite colength. Suppose that
$l$ is the length of maximal weak-(FC)-sequences in $I$ with
respect to $(J,I;G).$ Then
\begin{itemize}
\item [(i)] $l=N(I)=s({I}).$
\item [(ii)] If
${\mathfrak J}$ is generated by a maximal weak-(FC)-sequence in
${I}$ with respect to $(J,I).$ Then ${\mathfrak J}$ is a reduction
of $I$ and $\mu({\mathfrak J})=l=s(I).$
\end{itemize}
\end{thm}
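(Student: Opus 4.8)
The plan is to read Theorem~\ref{teo1} off in the special case $q=1$, $I_1=I$, $t=1$, $J_1=J$ and $M=G$, so that $U=(J,I;G)$, $U'=(I;G)$ and $N_{U'}(I)=N(I)$. Since $q=1$, the union $I_1\cup\cdots\cup I_q$ is simply $I$, and by Theorem~\ref{teo1}~$(i)$ (equivalently Proposition~\ref{prop4}~$(iii)$) the length of a maximal weak-(FC)-sequence in $I$ with respect to $U$ is an invariant; hence every such sequence has the same length $l$ and $\max L_U(G)=l$. The standing hypothesis that $\mathcal I$ is not contained in $\sqrt{\mbox{Ann}(G)}$ guarantees, via Remark~\ref{remark0}, that maximal weak-(FC)-sequences in $I$ exist, so this $l$ is well defined.

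First I would apply Theorem~\ref{teo1}~$(iii)$: for a maximal weak-(FC)-sequence $x_1,\dots,x_l$ in $I$ with respect to $U$, the submodule $\mathfrak J=(x_1,\dots,x_l)$ is a reduction of $I$ with respect to $U'$ and $l=N_{U'}(I)=N(I)$. As $\mathfrak J$ is generated by $l$ elements we have $\mu(\mathfrak J)\le l$, while $N(I)\le\mu(\mathfrak J)$ because $N(I)$ is the minimum of $\mu(J')$ taken over all reductions $J'$ of $I$ and $\mathfrak J$ is one such reduction. Together with $l=N(I)$ this forces $\mu(\mathfrak J)=l=N(I)$. Applying next Theorem~\ref{teo1}~$(iv)$ in this setting (with $I=I_1$, $J=J_1$ and $M=G$) yields $l=\max L_U(G)\le s(I)$.

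It remains to establish the reverse inequality $s(I)\le l$. Here I would use that $\mathfrak J$ is a reduction of $I$: the defining identity $I^rG_p=\mathfrak J\,I^{r-1}G_p$, valid for all large $r$ and all $p$, shows that $\mathcal R[I]$ is module-finite over $\mathcal R[\mathfrak J]$, so the fibre cones $\mathcal R[I]/\mathfrak m\mathcal R[I]$ and $\mathcal R[\mathfrak J]/\mathfrak m\mathcal R[\mathfrak J]$ have equal Krull dimension, that is, $s(I)=s(\mathfrak J)$. Since the latter fibre cone is generated over the field $R/\mathfrak m$ by $\mu(\mathfrak J)$ elements of degree one, we have $s(\mathfrak J)\le\mu(\mathfrak J)$, exactly as was recorded for the inequality $s(I)\le\mu(I)$. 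Hence $s(I)=s(\mathfrak J)\le\mu(\mathfrak J)=l$, and combined with $l\le s(I)$ this gives $l=s(I)$; together with $l=N(I)$ this proves $(i)$, while $(ii)$ is then immediate since $\mathfrak J$ is a reduction of $I$ by $(iii)$ and $\mu(\mathfrak J)=l=s(I)$ by $(i)$. The only point beyond a direct specialization of Theorem~\ref{teo1} is this reverse inequality: it rests on the standard fact that a reduction induces a module-finite extension of Rees algebras and hence preserves the analytic spread, which is what allows $s(I)$ to be bounded above by the number of generators of the particular reduction $\mathfrak J$ produced in $(iii)$.
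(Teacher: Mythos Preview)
Your proof is correct and follows the paper's approach: the paper derives Theorem~\ref{teo2} as an immediate consequence of Theorem~\ref{teo1}, items~$(iii)$ and~$(iv)$, via the Remark immediately preceding the statement, and you carry out exactly this specialization with $q=1$, $I_1=I$, $t=1$, $J_1=J$, $M=G$. The one place where you add substance beyond the paper's terse ``by Theorem~\ref{teo1} (iii) and (iv)'' is the reverse inequality $s(I)\le l$: items~$(iii)$ and~$(iv)$ alone yield only $l=N(I)$, $\mu(\mathfrak J)=l$, and $l\le s(I)$, so some argument for $s(I)\le l$ is genuinely needed, and your observation that the reduction $\mathfrak J$ makes $\mathcal R[I]$ module-finite over $\mathcal R[\mathfrak J]$ (whence $s(I)\le s(\mathfrak J)\le\mu(\mathfrak J)=l$) supplies exactly the missing step the paper leaves implicit.
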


Now, we are interested in determine the length of maximal
(FC)-sequences and its relation with mixed multiplicities. In
order to explain some  notations of the next result, we need to
define a slightly different kind of mixed multiplicities which are
needed for the remaining of this section. Let $J$ as before be a
finitely generated $R$-submodule of $G_1$ of finite colength and
let $K$ be an $R$-submodule of $G_q,\, q\geq 1$. Then, as in the
proof of \cite[Theorem 4.1]{perez-bedregal1}, we can prove that
the length
$$\ell\left(\frac{K^nM_{p+m}}{J^mK^nM_p}\right)$$
is, for large $n,m,p$ a polynomial of degree
$s:=\dim{\mbox{Supp}}\left (M/(0_M:{\cal K}^{\infty})\right ),$
where $\cal K$ is the ideal of $G$ generated by $K.$ The terms of
higher degree of this polynomial could be written as
$$S(m,p,n)=\sum_{k_0+i+j=s}\frac{e^j(J^{[k_0]},K^{[i]};M)}{k_0!j!i!}
m^{k_0}n^{i}p^j.$$

\begin{thm}\label{teo3}
Keeping the setup $(1)$, set $I=I_1\cdots I_q$ and
$U=(J,I_1,...,I_q;M)$. Let $\max L^*_{U}(M)$ denote the set of
lengths of maximal (FC)-sequences in $I_1\cup\dots \cup I_q$ with
respect to $U.$ Suppose that $l$ is the length of maximal
(FC)-sequences in $I$ with respect to $({\mathfrak m}G_1, I;M).$
Then
\begin{itemize}
\item [(i)] If $e^j(J^{[k_0]},I_1^{[k_1]},\dots, I_q^{[k_q]};M)\neq
0$, then $e^j(J'^{[m_0]},I_1^{[m_1]},\dots, I_q^{[m_q]};M)\neq 0$
for any  finitely generated $R$-submodule $J'$ of $G_1$ of finite
colength and for all $m_1\leq k_1, ...,m_q\leq k_q$,\, $m_0> 0$
such that $j+m_0+m_1+...+m_q=D.$

\item [(ii)]
$$e^j(J^{[D-j-i]},I^{[i]};M)=
\sum_{|\bold k|=i}(i!)\frac{e^j(J^{[D-j-i]},I_1^{[k_1]},\dots,
I_q^{[k_q]};M)}{k_1!\cdots k_q!},$$ \noindent  for all $i\leq
D-j.$

\item [(iii)]$\max L^*_U(M)=l\leq s({ I})-1$.
\item [(iv)] $\mbox{ht}({\mathcal
I}+\mbox{Ann}(M)/\mbox{Ann}(M))-1\leq \max L^*_U(M).$
\end{itemize}
\end{thm}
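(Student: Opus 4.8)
The plan is to treat the four items separately, reducing each to the (FC)-sequence theory of Sections 3--5 together with the diagonal comparison that defines the auxiliary multiplicities $S(m,p,n)$.

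For (i), the goal is to exhibit an (FC)-sequence, with respect to $(J',I_1,\dots,I_q;M)$, consisting of $m_i$ elements of $I_i$; once such a sequence is available, Theorem \ref{Teo1.2}(ii) (whose standing hypothesis $m_0>0$ is exactly our assumption) delivers $e^j(J'^{[m_0]},I_1^{[m_1]},\dots,I_q^{[m_q]};M)\neq 0$. By Lemma \ref{lema1}(ii) the existence of such a sequence is insensitive to the finite-colength module, so I may construct it with respect to $(J,I_1,\dots,I_q;M)$ instead. I build it greedily: as long as the running mixed multiplicity $e^j(J^{[k_0]},\dots,I_i^{[a_i]},\dots;N)$ (for the current quotient $N$) is nonzero with $a_i>0$, Proposition \ref{prop4}(i) promotes a weak-(FC)-element of $I_i$---one exists by Proposition \ref{Obs1}---to a genuine (FC)-element, while equality (\ref{equation2}) shows that after passing to $N'=N/(xN:\mathcal I^{\infty})$ the multiplicity with the $i$-th exponent lowered by one is still nonzero. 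Extracting $m_1$ elements from $I_1$, then $m_2$ from $I_2$, and so on (at each stage the relevant exponent is at least the remaining target, hence positive), produces the required (FC)-sequence of type $(m_1,\dots,m_q)$. Note this argument uses only $k_i\geq m_i>0$ for the indices touched, and never needs $k_0>0$ in the hypothesis.

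For (ii), I would compare the two Hilbert polynomials. Writing $I=I_1\cdots I_q$ one has $I^n=\mathbf{I}^{(n,\dots,n)}$, so that the length defining $S$ is obtained from $h(n,p,\mathbf r)$ by the diagonal substitution $\mathbf r=(n,\dots,n)$; since $\mathcal K=\mathcal I$, the two polynomials share the total degree $s=D$. Substituting $\mathbf r=(n,\dots,n)$ into the top-degree form $B(n,p,\mathbf r)$ turns $\mathbf r^{\mathbf k}$ into $n^{|\mathbf k|}$, and grouping the terms with $|\mathbf k|=i$ shows that the coefficient of $m^{k_0}n^ip^j$ in $S(m,p,n)$ equals $\frac{1}{k_0!\,j!}\sum_{|\mathbf k|=i}\frac{1}{\mathbf k!}\,e^j(J^{[k_0]},I_1^{[k_1]},\dots,I_q^{[k_q]};M)$. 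Matching this against the defining expansion of $S$ and setting $k_0=D-j-i$ yields the stated identity.

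For (iii), I combine (ii) with the degree characterizations of Section 5. By Proposition \ref{prop4}(iv) and Theorem \ref{Teo1.2}(ii), the length of a maximal (FC)-sequence is governed by the largest total $I$-exponent carrying a nonzero $e^0$; thus $\max L^*_U(M)=\max\{\,|\mathbf k|:\ e^0(J^{[D-|\mathbf k|]},I_1^{[k_1]},\dots,I_q^{[k_q]};M)\neq 0\,\}$ and $l=\max\{\,i:\ e^0(J^{[D-i]},I^{[i]};M)\neq 0\,\}$. These two numbers coincide: by formula (ii) (with $j=0$) and the non-negativity of mixed multiplicities, $e^0(J^{[D-i]},I^{[i]};M)$ vanishes precisely when every summand $e^0(J^{[D-i]},I_1^{[k_1]},\dots,I_q^{[k_q]};M)$ with $|\mathbf k|=i$ vanishes, so $\max L^*_U(M)=l$ (the independence from the chosen finite-colength module coming from Lemma \ref{lema1} and item (i)). For the bound, Proposition \ref{prop5}(i) identifies the maximal weak-(FC)-length in $I$ with $\deg Q(r)+1$, and the analytic-spread estimate in Theorem \ref{teo1}(iv) gives $\deg Q(r)\leq s(I)-1$; since $l$ reads off only the $j=0$ slice of the top-degree form while $\deg Q(r)$ sees every $j$, one has $l\leq \deg Q(r)\leq s(I)-1$.

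Finally, (iv) is immediate from Proposition \ref{prop4}(v): when $h=\mbox{ht}(\mathcal I+\mbox{Ann}M/\mbox{Ann}M)>0$, the initial segment $x_1,\dots,x_{h-1}$ of a maximal weak-(FC)-sequence is an (FC)-sequence, which extends to a maximal one, whence $\max L^*_U(M)\geq h-1$; for $h=0$ the inequality is vacuous. The only genuinely delicate step is the greedy construction in (i): one must verify that nonvanishing of the mixed multiplicity propagates under each single (FC)-extraction, so that the prescribed type $(m_1,\dots,m_q)$ can be realized as a prefix, and this is exactly where equality (\ref{equation2}) and Proposition \ref{prop4}(i) do the work.
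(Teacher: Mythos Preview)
Your overall strategy matches the paper's, but there are two points where your justifications do not quite go through.

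\textbf{Part (i).} Lemma~\ref{lema1}(ii) only compares maximal (FC)-lengths inside a \emph{single} $I_j$; it does not say that an (FC)-sequence of prescribed type $(m_1,\dots,m_q)$ with respect to $(J,I_1,\dots,I_q;M)$ is also one with respect to $(J',I_1,\dots,I_q;M)$. The condition $(FC_1)$ genuinely involves the finite-colength module, and Lemma~\ref{lema0} only transfers \emph{downward} (from a longer to a shorter tuple of $J$'s). The paper avoids this by choosing each weak-(FC)-element with respect to the enlarged tuple $(J,J',I_1,\dots,I_q;M)$: it is then weak-(FC) for both $(J,\dots)$ and $(J',\dots)$, and since $(FC_3)$ depends only on $\mathcal I$, once Proposition~\ref{prop4}(i) upgrades it to an (FC)-element for one tuple it is automatically (FC) for the other. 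Your greedy construction works verbatim once you make this single change in where Proposition~\ref{Obs1} is applied.

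\textbf{Part (iii), the bound $l\le s(I)-1$.} The reasoning ``$l$ reads off only the $j=0$ slice while $\deg Q(r)$ sees every $j$'' does not establish $l\le\deg Q(r)$: by Corollary~\ref{cor2} the $j=0$ slice already detects every nonvanishing (for $k_0>0$), so no strict gain comes from varying $j$. The paper instead argues directly: if $x_1,\dots,x_l$ is a maximal (FC)-sequence in $I$, Proposition~\ref{prop4}(ii) gives $\dim\mbox{Supp}(M/(x_1,\dots,x_l)M:\mathcal I^{\infty})=D-l\ge 0$, so $\mathcal I\not\subseteq\sqrt{\mbox{Ann}(M/(x_1,\dots,x_l)M)}$ and Proposition~\ref{Obs1} produces a further weak-(FC)-element. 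Thus $l+1$ does not exceed the maximal weak-(FC)-length in $I$, which equals $s(I)$ by Theorem~\ref{teo1} (see the Remark following it). This yields $l\le s(I)-1$.

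Parts (ii) and (iv), and the equality $\max L^*_U(M)=l$ in (iii), are handled exactly as in the paper.
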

\begin{proof} The proof of $(i)$: the proof is by induction on
$t=|\bold k|.$ For $t=0$, by  \cite[Lemma 4.3]{perez-bedregal1} we
have $e^j(J'^{[m_0]},I_1^{[0]},\dots, I_q^{[0]};M)\neq 0$. Suppose
that $t>0.$ In this case and without loss of generality we may
assume that $k_1>0$. Since ${\mathcal I}$ is not contained in
$\sqrt{\mbox{Ann}M}$ we have by Proposition \ref{Obs1} that there
exists a weak-(FC)-element $x\in I_1$ with respect to
$(J,J',I_1,\dots,I_q;M)$. By Definition, $x$ is also a
weak-(FC)-element with respect to $(J,I_1,\dots,I_q;M)$ and
$(J',I_1,\dots,I_q;M)$. Since $k_1>0$ and
$e^j(J^{[k_0]},I_1^{[k_1]},\dots, I_q^{[k_q]};M)\neq 0$, $x\in
I_1$ is an (FC)-element with respect to $(J,I_1,\dots,I_q;M)$ (see
Proposition \ref{prop4}). Then by \cite[Proposition
4.4]{perez-bedregal1}
$$e^j(J^{[k_0]},I_1^{[k_1]},\dots, I_q^{[k_q]};M)=e^j(J^{[k_0]},I_1^{[k_1-1]},\dots, I_q^{[k_q]};\overline{M}),$$
where $\overline{M}=M/xM.$   Hence
$$e^j(J^{[k_0]},I_1^{[k_1-1]},\dots, I_q^{[k_q]};\overline{M})\neq 0.$$
\noindent By inductive assumption follows that
$e^j({J'}^{[m_0]},I_1^{[m_1-1]},\dots,
I_q^{[m_q]};\overline{M})\neq 0$ for all $m_1-1\leq
k_1-1,...,m_q\leq k_q, m_0>0$ such that
$j+m_0+(m_1-1)+...+m_q=D-1.$ By \cite[Proposition
4.4]{perez-bedregal1}, we have
$$e^j({J'}^{[m_0]},I_1^{[m_1-1]},\dots, I_q^{[m_q]};\overline{M})=e^j({J'}^{[m_0]},I_1^{[m_1]},\dots, I_q^{[m_q]};M).$$
\noindent Thus, $e^j({J'}^{[m_0]},I_1^{[m_1-]},\dots,
I_q^{[m_q]};M)\neq 0$ for all $m_1\leq k_1,...,m_q\leq k_q,$
$m_0>0$ such that $j+m_0+m_1+...+m_q=D.$ The induction is
complete.

The proof of $(ii):$ By  \cite[Theorem 4.1]{perez-bedregal1} it
follows that
$$\ell\left(\frac{{\bold I}^{\bold r}M_{p+m}}{J^m{\bold I}^{\bold r}M_p}\right)$$ \noindent is a polynomial of degree $D$ for large
$p,m, {\bold r}.$ The terms of total degree $D$ in this polynomial
are
$$P(m,p,{\bold r})=\sum_{k_0+|\bold k|+j=D}\frac{e^j(J^{[k_0]},I_1^{[k_1]},\dots, I_q^{[k_q]};M)}{k_0!j!k_1!\cdots k_q!}
m^{k_0}r_1^{k_1}\cdots r_q^{k_q}p^j.$$ \noindent In particular, if
$r_1=...=r_q=n\gg 0$ we have
$$\ell\left(\frac{{\bold I}^{\bold r}M_{p+m}}{J^m{\bold I}^{\bold r}M_p}\right)=
\ell\left(\frac{I_1^{n}\cdots I_q^{n}M_{p+m}}{J^mI_1^{n}\cdots
I_q^{n}M_p}\right)=\ell\left(\frac{I^nM_{p+m}}{J^mI^nM_p}\right).$$
Hence
$$P(m,p,n,\dots,n)=S(m,n,p)=\sum_{k_0+i+j=D}\frac{e^j(J^{[k_0]},I^{[i]};M)}{k_0!j!i!}
m^{k_0}n^{i}p^j.$$ \noindent From this it follows that
$$\sum_{k_0+i+j=D}\frac{e^j(J^{[k_0]},I^{[i]};M)}{k_0!j!i!}
m^{k_0}n^{i}p^j\!=\!\!\!\!\!\!\!\!\sum_{k_0+|\bold
k|+j=D}\!\!\!\!\!\frac{e^j(J^{[k_0]},I_1^{[k_1]},\dots,
I_q^{[k_q]};M)}{k_0!j!k_1!\cdots k_q!} m^{k_0}n^{|\bold k|}p^j.
$$
\noindent Comparing both terms we get
$$e^j(J^{[D-j-i]},I^{[i]};M)=\sum_{|\bold
k|=i}(i!)\frac{e^j(J^{[D-j-i]},I_1^{[k_1]},\dots,
I_q^{[k_q]};M)}{k_1!\cdots k_q!}$$ \noindent for all $i\leq D-j.$

The proof of $(iii)$: By $(ii)$ we have
$$e^j(J^{[D-j-i]},I^{[i]};M)= \sum_{|\bold
k|=i}(i!)\frac{e^j(J^{[D-j-i]},I_1^{[k_1]},\dots,
I_q^{[k_q]};M)}{k_1!\cdots k_q!}.$$ \noindent From this it follows
that
$$\max\{i| e^j(J^{[D-j-i]},I^{[i]};M)\neq
0\}\!\!=\!\!\max\{i \!\!=\!\!|\bold
k||e^j(J^{[D-j-i]},I_1^{[k_1]},\dots, I_q^{[k_q]};M)\neq 0\}.$$
\noindent Hence from $(i)$ and Theorem \ref{Teo1.2} we get
$$
\begin{array}{lll}
\vspace{0.3cm} l&=&\max\{i| e^j({\mathfrak
mG_1}^{[D-j-i]},I^{[i]};M)\neq
0\}\\
\vspace{0.3cm} &=&\max\{i=|\bold
k||e^j(J^{[D-j-i]},I_1^{[k_1]},\dots, I_q^{[k_q]};M)\neq 0\}.
\end{array}
$$
\noindent and
$$
\max\{i=|\bold k||e^j(J^{[D-j-i]},I^{[i]};M)\neq 0\} =\max
L^*_U(M).
$$
Therefore, $l=\max L^*_U(M).$

Now, assume that $x_1,\dots,x_l$ is a maximal (FC)-sequence in
$I$ with respect to $U.$ By Proposition
\ref{prop4} it follows that
$$\dim(\mbox{Supp}(M/(x_1,\dots,x_l)M:{\mathcal I}^{\infty}))=\dim(\mbox{Supp}(M/(0_M:{\mathcal I}^{\infty}))-l.$$
\noindent In particular we have that $\cal I$ is not contained in
$\sqrt{\mbox{Ann}(M/(x_1,...,x_l)M)},$ and hence, by Proposition
\ref{Obs1}, there exists a weak-(FC)-element $x\in I$ such that
$x_1,\dots,x_l,x$ is a weak-(FC)-sequence in $I$ with respect to
$(J,I;M).$ Let $k$ be the length of
 a maximal weak-(FC)-sequence in $I$ with respect to $(J,I;M).$ By Theorem \ref{teo1} $(ii)$,
$k=s({ I}).$ Hence $l+1\leq k=s({ I}).$

The proof of $(iv):$ We may assume that $\mbox{ht}({\mathcal
I}+\mbox{Ann}(M)/\mbox{Ann}(M))>0$, for otherwise the result is
trivial. By Proposition \ref{prop4} $(vi)$ we have
$$e^j(J^{[k_0]},I_1^{[k_1]},\dots, I_q^{[k_q]};M)\neq 0$$
\noindent  for $|\bold k|\leq \mbox{ht}({\mathcal
I}+\mbox{Ann}(M)/\mbox{Ann}(M))-1.$ Hence by \cite[Theorem
4.6]{perez-bedregal1},
$$\mbox{ht}({\mathcal I}+\mbox{Ann}(M)/\mbox{Ann}(M))-1\leq
\min L^*_U(M).$$ The proof is complete.
\end{proof}

By Theorem \ref{Teo1.2}, $e^j(J^{[k_0]},I_1^{[k_1]},\dots,
I_q^{[k_q]};M)\neq 0$ if and only if there exists an (FC)-sequence
$x_1,...,x_t$ $(t=k_1+\cdots +k_q)$ with respect to
$U=(J,I_1\dots,I_q;G)$ consisting of $k_1$ elements in
$I_1$,...,$k_q$ elements in $I_q$. If $\mbox{ht}(\mathcal
I)=s(I),$ then $\mbox{ht}(\mathcal I)-1=l=s( I)-1.$ As a
consequence of Theorem \ref{teo3} and Propositions \ref{prop5}
$(vii)$ we immediately get the following result.

\begin{cor}\label{cor1} Assume that $\mbox{ht}(\mathcal
I)=h>0$ and let $J$ be finitely generated $R$-submodule of
$G_1$ of finite colength. Then
\begin{itemize}
\item [(i)] $e^j(J^{[k_0]},I_1^{[k_1]},\dots, I_q^{[k_q]};G)= 0$ if $|\bold k|\geq
s( I).$

\item [(ii)] $e^j(J^{[k_0]},I_1^{[k_1]},\dots,
I_q^{[k_q]};G)\neq 0$ if $|\bold k|\leq h-1$ and if $x_1,...,x_t$
$(t=|\bold k|)$ is a weak-(FC)-sequence in $I_1\cup\dots \cup I_q$
with respect to $U$ consisting of $k_1$ elements in
$I_1$,...,$k_q$ elements in $I_q,$ then
$$e^j(J^{[k_0]},I_1^{[k_1]},\dots,
I_q^{[k_q]};G)=e^{j}_{BR}(J,{\overline G}),$$ where
$\overline{G}=G/(x_1,...,x_t)G.$

\item [(iii)] If
$\mbox{ht}(\mathcal I)=s({ I}),$ then
$e^j(J^{[k_0]},I_1^{[k_1]},\dots, I_q^{[k_q]};G)\neq 0$ if and
only if $|\bold k|\leq h-1.$
\end{itemize}
\end{cor}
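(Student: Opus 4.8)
The plan is to derive all three items directly from the machinery already assembled, specializing everything to $M=G$. The two bookkeeping facts that make this possible are that $\mbox{Ann}(G)=0$, so that $\mbox{ht}(\mathcal{I}+\mbox{Ann}(G)/\mbox{Ann}(G))=\mbox{ht}(\mathcal{I})=h$ and the height hypotheses of Proposition \ref{prop4} coincide with the hypothesis of the corollary, and that $h>0$ forces $\mathcal{I}\not\subseteq\sqrt{\mbox{Ann}(G)}=\sqrt{0}$, so that Proposition \ref{Obs1} and the length formulas of Section \ref{Section} all apply with $M=G$.

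For $(i)$ I would invoke Theorem \ref{teo3} $(iii)$, whose proof identifies the length $l$ of maximal (FC)-sequences in $I$ with respect to $(\mathfrak{m}G_1,I;G)$ with
$$l=\max\{\,|\mathbf{k}|:e^j(J^{[D-j-|\mathbf{k}|]},I_1^{[k_1]},\dots,I_q^{[k_q]};G)\neq 0\,\}\le s(I)-1,$$
the nonvanishing being independent of the finite-colength submodule $J$ by Theorem \ref{teo3} $(i)$ (pass between $J$ and $\mathfrak{m}G_1$ keeping $m_i=k_i$). Hence if $|\mathbf{k}|\ge s(I)$, then $|\mathbf{k}|>s(I)-1\ge l$, so $|\mathbf{k}|$ exceeds the maximal possible length and the corresponding mixed multiplicity vanishes.

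For $(ii)$, since $\mbox{ht}(\mathcal{I})=h>0$ and $\mbox{Ann}(G)=0$, the hypotheses of Proposition \ref{prop4} $(vi)$ and $(vii)$ hold with $M=G$. Item $(vi)$ gives $e^j(J^{[k_0]},I_1^{[k_1]},\dots,I_q^{[k_q]};G)\neq 0$ whenever $|\mathbf{k}|\le h-1$, and item $(vii)$ then yields $e^j(J^{[k_0]},I_1^{[k_1]},\dots,I_q^{[k_q]};G)=e^{j}_{BR}(J,\overline{G})$ with $\overline{G}=G/(x_1,\dots,x_t)G$ for any weak-(FC)-sequence $x_1,\dots,x_t$ of length $t=|\mathbf{k}|\le h-1$ consisting of $k_1$ elements of $I_1,\dots,k_q$ elements of $I_q$. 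This is precisely the assertion of $(ii)$.

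For $(iii)$, assume $\mbox{ht}(\mathcal{I})=s(I)$, so $h=s(I)$. The forward implication is the contrapositive of $(i)$: since $e^j=0$ once $|\mathbf{k}|\ge s(I)=h$, nonvanishing forces $|\mathbf{k}|\le h-1$. The reverse implication is the nonvanishing half of $(ii)$ (equivalently Proposition \ref{prop4} $(vi)$), since a weak-(FC)-sequence of the required type exists by Proposition \ref{Obs1} once $|\mathbf{k}|\le h-1$. Combining the two gives the stated equivalence. I do not expect a genuine obstacle here; the work is entirely in the alignment of hypotheses, namely confirming that $\mbox{Ann}(G)=0$ makes the height conditions match, that the degree constraint $k_0=D-j-|\mathbf{k}|\ge 0$ keeps every multiplicity well defined, and that the maximal length read off from Theorem \ref{teo3} $(iii)$ is the same for the union $I_1\cup\dots\cup I_q$ and the product $I=I_1\cdots I_q$, as that theorem guarantees.
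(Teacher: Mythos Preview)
Your proposal is correct and follows essentially the same route as the paper: the corollary is stated as an immediate consequence of Theorem~\ref{teo3} together with Proposition~\ref{prop4}~(vii) (the paper's reference to ``Proposition~\ref{prop5}~(vii)'' is a typo, since Proposition~\ref{prop5} has only parts (i)--(ii)). Your write-up simply makes explicit the specialization $M=G$, the bookkeeping $\mbox{Ann}(G)=0$, and the way the bound $l\le s(I)-1$ from Theorem~\ref{teo3}~(iii) combined with the identification of $l$ with $\max\{|\mathbf{k}|:e^j\neq 0\}$ from its proof yields part~(i), which is exactly what the paper has in mind.
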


\section{Aplication for modules}

The most important application of the theory of mixed
multiplicities and (FC)-sequences just developed is in the context
of  families of $R$-submodules of  free modules, as we will
explain below.

 Let $(R, \mathfrak{m})$ be a Noetherian local ring. For any submodule $E$
 of the free $R$-module $R^p,$ the symmetric algebra
$G:=\mbox{Sym}(R^p)=\oplus S_n(R^p)$ of $R^p$ is a polynomial ring
$R[T_1,\ldots ,T_p].$ If $h=(h_1,\ldots ,h_p)\in R^p,$ then we
define the element $w(h)=h_1T_1 + \ldots +h_pT_p\in
S_1(R^p)=:G_1.$ We denote by ${\cal R}(E):=\oplus {\cal R}_n(E)$
the subalgebra of $G$ generated in degree one by $\{w(h): h\in
E\}$ and call it the {\it  Rees algebra} of $E$.  Given any
finitely generated $R$-module $N$ consider the graded $G$-module
$M:=G\otimes_R N.$ We are now ready to recall  the notion of mixed
multiplicities for family of submodules  in $R^p,$ as defined in
\cite{perez-bedregal1}. Here the linear submodules of $G_1$ of the
previous sections will be replaced by a module $E.$

Let $F,E_1,\dots,E_q$ be finitely generated $R$-submodules of
$R^p$ with $F$ of finite colength and denote by $J,I_i$ the
$R$-submodule of $G_1$ generated by ${\cal R}_1(J)$ and ${\cal
R}_1(E_i),$ for all $i=1,\dots, q,$ respectively.  Let ${\mathcal
I}$ be the ideal of $G$ generated by $I_1\cdots I_q$. The
$j^{\mbox{th}}$-{\it mixed multiplicity}
$e^j(F^{[k_0]},E_1^{[k_1]},\dots,E_q^{[k_q]};N)$ of the modules
$F,E_1,\dots, E_q,$ with respect to $N,$ are defined by
$$e^j(F^{[k_0]},E_1^{[k_1]},\dots,E_q^{[k_q]};N)=e^j(J^{[k_0]},I_1^{[k_1]},\dots,I_q^{[k_q]};M)$$
for all $j,k_0,k_1,\dots,k_q\in \N$ with $j+|{\bf k}|=D,$ where
$D=\dim \left (N/(0_N:_N {\cal I}^{\infty})\right )+p-1.$ We call
$e^0(F^{[k_0]},E_1^{[k_1]},\dots,E_q^{[k_q]};N)$ the {\it mixed
multiplicity} of $F,E_1,\dots, E_q,$ with respect to $N,$ and it
is denoted by $e(F^{[k_0]},E_1^{[k_1]},\dots,E_q^{[k_q]};N).$

A sequence of elements $h_1,\ldots, h_q,$ with $h_i\in E_i,$ is a
{\it weak-(FC)-sequence} (resp. {\it (FC)-sequence}) for
$E_1,\ldots, E_q$ with respect to $(E_1,\dots,E_q; N)$ if  the
sequence $w(h_1),\ldots, w(h_k)$ is a weak-(FC)-sequence (resp.
(FC)-sequence)  with respect to $(I_1,...,I_q; M).$

It is now an easy matter to translate into this context all the
results of the previous sections, obtaining in this way important
information on mixed multiplicities and (FC)-sequences for a
family of modules.

\end{document}